\numberwithin{equation}{section}
\def\R{{\mathbb R}}
\def\C{{\mathbb C}}
\def\H{{\mathbb H}}
\def\i{{\mathbf i}}
\def\j{{\mathbf j}}
\def\k{{\mathbf k}}
\def\t{\theta}
\newcommand{\be}{\begin{equation}}
\newcommand{\ee}{\end{equation}}
\renewcommand{\P}{\mathbb{P}}
\newcommand{\E}{\mathbb{E}}
\newcommand{\Conf}{\operatorname{Conf}}
\newcommand{\Pf}{\operatorname{Pf}}
\newcommand{\qdet}{\operatorname{Qdet}}
\newtheorem{thm}{Theorem}
\newtheorem*{claim}{Claim}
\newtheorem{cor}[thm]{Corollary}
\newtheorem{lem}[thm]{Lemma}
\newtheorem{prop}[thm]{Proposition}
\theoremstyle{definition}
\newtheorem{dfn}{Definition}
\newtheorem{ex}{Example}
\theoremstyle{remark}
\newtheorem{rmk}{Remark}
\numberwithin{thm}{section}
\numberwithin{rmk}{section}
\numberwithin{dfn}{section}
\begin{document}

\title[Conditional measures for Pfaffian point processes]{Conditional measures for Pfaffian point processes: conditioning on a bounded domain}

\author[A. I. Bufetov]{Alexander I. Bufetov}
\address[Alexander I. Bufetov]{\newline
Aix-Marseille Universit{\'e}, Centrale Marseille, CNRS, Institut de Math{\'e}matiques de Marseille, UMR7373, 
  39 Rue F. Joliot Curie 13453, Marseille, France;\newline
  Steklov  Mathematical Institute of RAS, Moscow, Russia;\newline
    Institute for Information Transmission Problems, Moscow, Russia.  }
  \email{alexander.bufetov@univ-amu.fr}

\author[F. D. Cunden]{Fabio Deelan Cunden}
\address[Fabio Deelan Cunden]{\newline
School of Mathematics and Statistics, University College Dublin, Dublin 4, Ireland.}
\email{fabio.cunden@ucd.ie}

\author[Y. Qiu]{Yanqi Qiu}
\address[Yanqi Qiu]{\newline
Institute of Mathematics, AMSS, CAS \and Hua Loo-Keng Key Laboratory of Mathematics, Beijing, China; \newline
 CNRS, Institut de Math{\'e}matiques de Toulouse, Universit{\'e} Paul Sabatier, Toulouse, France.}
\email{yanqi.qiu@hotmail.com, yanqi.qiu@amss.ac.cn}

\maketitle

\begin{abstract} 
For  a Pfaffian point process  we show that its Palm measures,  its normalised compositions with  multiplicative functionals, and its conditional measures with respect to fixing the configuration in  a bounded subset are Pfaffian point processes whose kernels we find explicitly. 
\end{abstract}

\section{Introduction and main results}
\subsection{Pfaffian point processes}
%\subsection{Pfaffian point processes}
Let $E$ be a locally compact   $\sigma$-compact Polish space, endowed with a positive  $\sigma$-finite Radon measure $\mu$. We assume that the metric on $E$ is such that  any bounded set  is relatively compact, see Hocking and Young \cite[Theorem 2-61]{Hocking-Young}. A (locally finite) {\it configuration} $X$ on $E$ is a collection of points of $E$ (possibly with multiplicities and considered without regard to order) such that any bounded subset of $E$ contains only finitely many points of $X$.  A configuration $X$ on $E$ is called {\it simple} if all points in $X$ have multiplicity one.   Let $\Conf(E)$ denote the set of all configurations on $E$. By identifying  any configuration $X \in \Conf(E)$ with the  Radon measure $\sum_{x \in X} \delta_x$ on $E$, the set $\Conf(E)$ is embedded into the space $\mathfrak{M}(E)$ of Radon measures on $E$. In particular, with respect to the vague topology on $\mathfrak{M}(E)$, the subspace $\Conf(E)$ becomes a Polish space. We equip  $\Conf(E)$ with its Borel sigma-algebra.

By definition, a point process on $E$ is a Borel probability measure on $\Conf(E)$. A point process $\P$ on $E$ is called simple, if $\P$-almost every configuration is simple.  For further background on the general theory of point processes, see  Daley and Vere-Jones \cite{DV-1}, Kallenberg \cite{Kallenberg}.

Now we recall the definition of Pfaffian point processes. Recall that the {\it $k$-point correlation function} $\rho_{k}: E^k \rightarrow \R_{+}$ (with respect to the product measure $\mu^{\otimes k}$) of a point process $\P$, if  exists, is uniquely determined (up to $\mu^{\otimes k}$-negligible subsets) by 
$$
  \int\limits_{A_1^{l_1} \times  \cdots \times A_j^{l_j}} \rho_k(x_1, \cdots, x_k) d\mu^{\otimes k}(x_1, \cdots, x_k)  =  \int\limits_{\Conf(E)}   \prod_{i =1}^j  \frac{ \#(X \cap A_i)!}{( \# (X \cap A_i)  - l_i)! } d\P( X), 
$$
for all bounded disjoint Borel subsets $A_1, \cdots, A_j \subset E$ and positive integers $l_1, \cdots, l_j$ with $l_1 + \cdots + l_j = k$.  If $ \#(X \cap A_i)<l_i$, then we set $\#(X \cap A_i)!/( \#(X \cap A_i) - l_i)! = 0$. The measure 
\begin{align}\label{def-lambda-k}
d\lambda_k(x_1, \cdots, x_k)   = \rho_k(x_1, \cdots, x_k) d\mu^{\otimes k} (x_1, \cdots, x_k)
\end{align}
is called the {\it $k$-th correlation measure} of the point process $\P$.

Let $M_2(\C)$ denote the set of $2\times 2$ complex matrices. 
A  point process $\P$ on $E$ is called a {\it Pfaffian point process} if there exists a  matrix kernel $\mathbb{K}: E\times E \rightarrow M_2(\C)$: 
\[
\mathbb{K}(x, y) = \left[
\begin{array}{cc}
\mathbb{K}_{11}(x, y) & \mathbb{K}_{12}(x, y) 
\\
\mathbb{K}_{21}(x, y) & \mathbb{K}_{22}(x, y)
\end{array}
\right]
\]
satisfying %the assumption 
\begin{align}\label{anti-sym}
\mathbb{K}(x, y)^t =  - \mathbb{K}(y, x),
\end{align}
such that for any $k\ge 1$, the $k$-th correlation function  $\rho_k(x_1, \cdots, x_k)$ of $\P$ exists and is given by 
\[
\rho_k(x_1, \cdots, x_k) = \Pf[\mathbb{K}(x_i, x_j)]_{1 \le i, j \le k}. 
\]
Here $\Pf(A)$ is the Pfaffian of an antisymmetric matrix and the condition \eqref{anti-sym} implies that the $2k \times 2k$ matrix $[\mathbb{K}(x_i, x_j)]_{1\le i, j \le k}$ is antisymmetric.  Note that Pfaffian point processes are simple point processes. 

In this paper, it is more convenient to work with an equivalent definition of Pfaffian point processes in terms of quaternion determinants of quaternion valued kernel. The Dyson-Moore definition of quaternion determinant is recalled in Appendix~\ref{app:qdet}. % of this paper.  

Let $\H_\C$ denote the complex algebra of {\it complexified quaternions} consists of elements $q = q_0+ q_1\i+ q_2\j +q_3 \k$, where $q_0, q_1, q_2, q_3\in \C$ are complex numbers and  $\i, \j, \k$ are the quaternion units with the rules 
$$
\i^2=\j^2=\k^2=-1,\quad \i\j=-\j\i=\k,\quad \j\k=-\k\j=\i,\quad \k\i=-\i\k=\j.
$$
The adjoint of $q = q_0+ q_1\i+ q_2\j +q_3 \k$ is defined as 
\[
\overline{q}= q_0 - q_1 \i -q_2 \j  - q_3 \k.
\]
 Note that we have 
$
\overline{qr}=\overline{r}\,\overline{q}.
$
 Moreover, the \emph{scalar product property} holds: 
$
pq+\overline{q}\,\overline{p}=qp+\overline{p}\,\overline{q}.
$
 The set of \emph{scalars} $q=\overline{q}$ coincides with the field $\C$ of complex numbers, which is canonically identified with a subfield in $\H_\C$ via embedding $\C\ni a\mapsto a + 0 \i + 0 \j + 0 \k\in\H_\C$.

A square quaternion matrix $M$ is called  \emph{self-adjoint} if $M_{ij}=\overline{M_{ji}}$ for all $i,j$; it is  called \emph{almost self-adjoint} \cite{Dyson72} if there is an integer $k$ such that
\be
M_{ij}=\overline{M_{ji}}\quad \text{ for $i\neq k$ and $j\neq k$}.
\label{eq:a-s-a}
\ee
In particular, all self-adjoint matrices are almost self-adjoint. 
If $M$ is an almost self-adjoint quaternion matrix , then we denote $\qdet M$  the Dyson-Moore determinant of $M$;  the definition and properties of $\qdet M$ are recalled in Appendix~\ref{app:qdet}. We need almost self-adjoint quaternion matrices since these appear in the study of Palm measures, cf. Remark \ref{rem-almost-sa} below. 

A point process $\P$ on $E$ is called a  Pfaffian point process if there exists 
a self-adjoint quaternion kernel $K\colon E\times E\to\H_\C$ such that for any positive integers $k$, the $k$-point correlation function of $\P$ exists and has the form
$$
\rho_{k}(x_1,\dots,x_k)=\qdet[K(x_i,x_j)]_{i,j=1}^n.
$$
In this case, $K$ is called a \emph{correlation kernel} of the Pfaffian point process $\P$ and  we denote the point process by $\P_{K}$. 

\begin{rmk}
For the equivalence of the two definitions of Pfaffian point processes, we refer to the equality \eqref{qdet-pf} in Appendix~\ref{app:qdet}. 
\end{rmk}

\begin{ex}[$\operatorname{CSE}$ process]\label{ex-1} Let $E=(-\pi,\pi]$. The probability density 
\[
p_{N}(\theta)=\frac{2}{(2\pi)^N(2N)!}\prod_{1\leq j<k\leq N}\left|e^{i\theta_j}-e^{i\theta_k}\right|^4,\quad\theta=(\t_1,\dots,\t_N)\in E^N,
\]
is the eigenvalue density of the Circular Symplectic Ensemble ($\operatorname{CSE}$) of random matrix theory.  This density defines a simple point process on $E$ with correlation functions
\[
\rho_{n}(\theta_1,\dots,\theta_n)=\frac{(N-n)!}{n!}\int\limits_{E^{N-n+1}} p_{N}(\theta)d\theta_{n+1}\cdots d\theta_{N}.
\]
It is well-known~\cite{Dyson70,Kargin14} that the point process is Pfaffian,
\[
\rho_{n}(\theta_1,\dots,\theta_n)=\qdet[K_{4,N}(\theta_i,\theta_j)]_{i,j=1}^n,
\]
 with translation invariant quaternion kernel $K_{4,N}(\t_1,\t_2)=\sigma_{4,N}(\theta_1-\theta_2)$:
\[
\sigma_{4,N}(\t)=\frac{1}{2\pi}\sum_{p=1/2}^{N-1/2}\left(\cos p\t+a_p\sin p\t\right),
\]
where $a_p=\dfrac{1}{2p}\left[ i (p^2-1) \i+(p^2+1)\j\right]$. The kernel can be also written as
\[
\sigma_{4,N}(\t)=\frac{1}{4}\Big[2s_{2N}(\t)- i (Is_{2N}(\t)+Ds_{2N}(\t))\i+(Is_{2N}(\t)-Ds_{2N}(\t))\j\Big],
\]
where
\begin{align*}
s_{2N}(\t)&=\frac{1}{2\pi}\frac{\sin(N\t/2)}{\sin(\t/2)},\\
\quad Is_{2N}(\t)&=\int_{0}^{\t}s_{2N}(\t')d\t',\quad \text{and}\quad 
Ds_{2N}(\t)=\frac{d}{d\t}s_{2N}(\t).
\end{align*}
\end{ex}

\begin{ex}[$\operatorname{Sine}_4$ process]\label{ex-2} The $\operatorname{Sine}_4$ process is the Pfaffian point process on $E=\R$ defined by the quaternion kernel~\cite{Dyson70} $K_{4}(x,y)=\sigma_4(x-y)$:
\[
\sigma_{4}(x)=\frac{1}{4}\left[s(x)- i (Is(x)+Ds(x))\i+(Is(x)-Ds(x))\j\right],
\]
where
\begin{align*}
s(x)&=\frac{\sin(\pi x)}{\pi x},\quad Is(x)=\int_{0}^{x}s(x')d x',\quad \text{and}\quad  Ds(x)=\frac{d}{dx}s(x). 
\end{align*}
The $\operatorname{Sine}_4$ can be thought of as a \emph{scaling limit} of the CSE process.
\end{ex}

\begin{ex}[Zeros of Gaussian power series]\label{ex-3}
The Gaussian power series \[
f(z) =\sum_{k=0}^\infty a_k z^k,
\] where $\{a_k\}$ are i.i.d. real standard Gaussian random variables,  defines almost surely a holomorphic function on the  open unit disk $\mathbb{D}$.  Matsumoto and Shirai \cite{Mat-Shirai}  showed that both the set of complex zeros and the set of real zeros of $f$ are Pfaffian point processes  on  the open unit disk $\mathbb{D}$ and on the interval $(-1, 1)$ respectively. 
\end{ex}

\begin{rmk}
In this paper, we consider $\H_\C$ since the complexified quaternions are generally involved in the correlation kernels of Pfaffian point processes as shown in the Examples \ref{ex-1}, \ref{ex-2} and \ref{ex-3} above. We also warn the reader that  the condition that all coefficients of the matricial kernel  (satisfying the assumption \eqref{anti-sym})
\[
\mathbb{K}(x, y) = \left[
\begin{array}{cc}
\mathbb{K}_{11}(x, y) & \mathbb{K}_{12}(x, y) 
\\
\mathbb{K}_{21}(x, y) & \mathbb{K}_{22}(x, y)
\end{array}
\right]
\]are real valued   in general does not  imply that the corresponding quaternion kernel is real-quaternionic. 
\end{rmk}

\subsection{Palm measures of Pfaffian point processes}

Let us briefly recall the definition of {\it Palm measures} of a point process. Here by Palm measures, we will always mean reduced Palm measures. We refer to \cite{DV-1,Kallenberg} for the general theory of Palm measures.

Let $\P$ be a simple point process on $E$ admitting $k$-th correlation measure $\lambda_k$ on $E^k$. Then for $\lambda_k$-almost every $k$-tuple $\mathfrak{q} = (q_1, \dots, q_k) \in E^k$, we  define  the  Palm measure   of $\P$ conditioned at $\mathfrak{q}$ as a point process $\P^{\mathfrak{q}}$ on $E$ by the following disintegration formula: for any non-negative Borel function $u: \Conf(E) \times E^k\rightarrow \R$, 
\[
\int\limits_{\Conf(E)}  \sum_{q_1, \dots, q_k \in X}^{*} u(X; \mathfrak{q}) \P(d X)  =    \int\limits_{E^k} \lambda_k(d\mathfrak{q}) \!\int\limits_{\Conf(E)} \!  u (X \cup \{q_1, \dots, q_k\};  \mathfrak{q})  \P^{\mathfrak{q}}(d X),
\]
where $\sum\limits^{*}$ denotes the sum over all mutually distinct points $q_1, \dots, q_k \in X$.

If we assume that the simple point process $\P$ admits correlation functions $\rho_m(x_1, \cdots, x_m)$ of all orders, then for $\lambda_m$-almost every $(x_1,\dots,x_m) \in E^m$,  the  Palm measure $\P^{x_1,\dots,x_m}$ exists  and admits all correlation functions $\rho_n^{x_1,\dots,x_m}$, $n\geq1$.  By \cite[Lemma 6.4]{Shirai03}, the correlation functions $\rho_n^{x_1,\dots,x_m}$ of the Palm measures $\P^{x_1,\dots,x_m}$ and the correlation functions $\rho_n$ of the original measure $\P$  satisfy the following  relation 
\begin{align}\label{eq:corr_Palm}
\rho_m(x_1,\dots,x_m)\rho_n^{x_1,\dots,x_m}(y_1,\dots,y_n)=\rho_{m+n}(x_1,\dots,x_m,y_1,\dots,y_n).
\end{align}

We now formulate  the Pfaffian analogue of the Shirai-Takahashi Theorem \cite[Theorem 6.5]{Shirai03} on Palm measures for determinantal point processes .
\begin{thm}\label{prop:Palm1}
Let $\P_K$ be a Pfaffian point process on $E$ induced by a self-adjoint quaternion kernel $K: E\times E \rightarrow \H_\C$. Then for $\lambda_1$-almost every $x_0\in E$, we have $K(x_0,x_0)>0$ and the Palm measure $\P_K^{x_0}$ coincides with the Pfaffian point process induced by the quaternion kernel
\begin{align}\label{eq:corr_Palm_Pf1}
\begin{split}
K^{x_0}(x,y)& =  \frac{1}{K(x_0,x_0)}\qdet
\left[ 
\begin{array}{ll}
K(x,y) & K(x_0,y)\\
K(x,x_0) & K(x_0,x_0)
\end{array} 
\right]
\\
& = 
K(x,y) - \frac{K(x,x_0)K(x_0,y)}{K(x_0,x_0)}.
\end{split}
\end{align}
In notation we have $\P_K^{x_0}=\P_{K^{x_0}}$. 
\end{thm}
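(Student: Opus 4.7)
The plan is to reduce the claim to Shirai's disintegration identity \eqref{eq:corr_Palm} and then to carry out a Schur-complement-type calculation for quaternion determinants. Taking $m=1$ in \eqref{eq:corr_Palm} gives, for $\lambda_1$-almost every $x_0$ and every $n\geq 1$,
\begin{equation*}
\rho_n^{x_0}(y_1,\ldots,y_n) \;=\; \frac{\rho_{n+1}(x_0,y_1,\ldots,y_n)}{\rho_1(x_0)}.
\end{equation*}
Since $K$ is self-adjoint, the diagonal entries satisfy $K(x_0,x_0)=\overline{K(x_0,x_0)}$ and therefore lie in the scalar subfield $\C$; expanding the $1\times 1$ quaternion determinant gives $\rho_1(x_0)=K(x_0,x_0)$, which is strictly positive for $\lambda_1$-almost every $x_0$. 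This simultaneously verifies $K(x_0,x_0)>0$ and legitimises the division by $K(x_0,x_0)$.

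The technical heart of the argument is to establish the bordered identity
\begin{equation*}
\qdet\bigl[K(x_i,x_j)\bigr]_{i,j=0}^{n} \;=\; K(x_0,x_0)\cdot \qdet\!\left[K(x_i,x_j)-\frac{K(x_i,x_0)K(x_0,x_j)}{K(x_0,x_0)}\right]_{i,j=1}^{n}.
\end{equation*}
I would prove this either directly from the Dyson--Moore calculus of quaternion determinants recalled in Appendix~\ref{app:qdet} --- using that row and column operations by scalar quaternions preserve $\qdet$ and that the distinguished pivot row/column contributes the prefactor $K(x_0,x_0)$ --- or, equivalently, by passing via \eqref{qdet-pf} to the classical bordered Pfaffian identity applied to the $(2n+2)\times(2n+2)$ antisymmetric matrix associated with the extended kernel. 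In the latter route, the $2\times 2$ antisymmetric block carrying the scalar $K(x_0,x_0)$ has Pfaffian equal to $K(x_0,x_0)$, and its Schur complement produces precisely the subtraction $K(x_i,x_0)K(x_0,x_j)/K(x_0,x_0)$. A subtle point is that after the elimination the reduced quaternion matrix need only be almost self-adjoint in the sense of \eqref{eq:a-s-a}, which is exactly why $\qdet$ is defined in this slightly enlarged class; verifying this cleanly is, I expect, the main obstacle.

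Combining the two displays immediately yields
\begin{equation*}
\rho_n^{x_0}(y_1,\ldots,y_n) \;=\; \qdet\bigl[K^{x_0}(y_i,y_j)\bigr]_{i,j=1}^{n}
\end{equation*}
with $K^{x_0}$ given by \eqref{eq:corr_Palm_Pf1}. The equivalence of the two expressions on the right-hand side of \eqref{eq:corr_Palm_Pf1} is just the $n=1$ case of the Schur complement identity, i.e.\ a direct expansion of the $2\times 2$ quaternion determinant. Finally, a short verification using $\overline{K(y,x_0)}=K(x_0,y)$, the scalarity of $K(x_0,x_0)$, and the antihomomorphism $\overline{qr}=\bar r\,\bar q$ shows that $K^{x_0}$ is self-adjoint, so by the uniqueness of correlation functions on a simple point process we conclude $\P_K^{x_0}=\P_{K^{x_0}}$.
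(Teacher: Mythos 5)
Your proposal is correct and follows essentially the same route as the paper: reduce to the Palm correlation identity \eqref{eq:corr_Palm} and prove the bordered quaternion-determinant (Schur-complement) identity by the elementary row/column operations of Lemma~\ref{lem:e1} together with the block-diagonal factorisation of Lemma~\ref{lem:e2}. The obstacle you anticipate does not in fact arise here: the reduced matrix $[K^{x_0}(x_i,x_j)]_{i,j=1}^n$ is genuinely self-adjoint, and almost-self-adjointness is only needed to interpret the $2\times2$ numerator in \eqref{eq:corr_Palm_Pf1} (cf.\ Remark~\ref{rem-almost-sa}).
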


\begin{cor}\label{cor-m-order-palm}
For each $m\geq1$ and $\lambda_m$-almost every $(x_1, \cdots, x_m)\in E^m$, we have $\qdet\left[K(x_i,x_j)\right]_{i,j=1}^m>0$ and the Palm measure $\P_K^{x_1,\dots,x_m}$ is a Pfaffian point process with correlation kernel $K^{x_1,\dots,x_m}$ given by
\begin{align}\label{eq:corr_Palm_gen}
K^{x_1,\dots,x_m} (x, y) = \frac{\qdet
\begin{bmatrix}
K(x,y) & K(x_1,y)& \cdots & K(x_m,y)\\
K(x,x_1) & K(x_1,x_1)& \cdots & K(x_m,x_1)\\
\vdots & &\ddots\\
K(x,x_m) & K(x_1,x_m) &\cdots &K(x_m,x_m)
\end{bmatrix}}{\qdet\left[K(x_i,x_j)\right]_{i,j=1}^m}.
\end{align}
 In notation, we have 
\begin{align}\label{eq-m-order-palm}
\P_K^{x_1,\dots,x_m}=\P_{K^{x_1,\dots,x_m}}.
\end{align}
\end{cor}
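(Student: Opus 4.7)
The plan is to prove the corollary by induction on $m \geq 1$, with Theorem \ref{prop:Palm1} serving as the base case. The engine of the induction is the tower property of Palm measures,
\[
\P_K^{x_1,\ldots,x_m} = \bigl(\P_K^{x_1,\ldots,x_{m-1}}\bigr)^{x_m},
\]
valid for $\lambda_m$-almost every $(x_1,\ldots,x_m) \in E^m$. By the inductive hypothesis, $\P_K^{x_1,\ldots,x_{m-1}}$ is a Pfaffian point process with kernel $K^{x_1,\ldots,x_{m-1}}$ given by \eqref{eq:corr_Palm_gen} at level $m-1$. Applying Theorem \ref{prop:Palm1} to this Pfaffian process with reference point $x_m$ yields the one-step recursion
\[
K^{x_1,\ldots,x_m}(x,y) = K^{x_1,\ldots,x_{m-1}}(x,y) - \frac{K^{x_1,\ldots,x_{m-1}}(x,x_m)\,K^{x_1,\ldots,x_{m-1}}(x_m,y)}{K^{x_1,\ldots,x_{m-1}}(x_m,x_m)}.
\]

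Write $D_m := \qdet[K(x_i,x_j)]_{i,j=1}^m$ and $N_m(x,y)$ for the numerator quaternion determinant on the right-hand side of \eqref{eq:corr_Palm_gen}. The inductive hypothesis reads $K^{x_1,\ldots,x_{m-1}}(x,y) = N_{m-1}(x,y)/D_{m-1}$. A simultaneous permutation of rows and columns, which preserves the Dyson-Moore determinant, identifies $N_{m-1}(x_m,x_m)$ with $D_m$, so that $K^{x_1,\ldots,x_{m-1}}(x_m,x_m) = D_m/D_{m-1}$. Substituting these into the recursion, and clearing denominators, reduces the corollary to the identity
\[
D_{m-1}\,N_m(x,y) = D_m\,N_{m-1}(x,y) - N_{m-1}(x,x_m)\,N_{m-1}(x_m,y),
\]
a Dodgson/Jacobi-type identity for the Dyson-Moore quaternion determinant of the almost self-adjoint matrix appearing in \eqref{eq:corr_Palm_gen}.

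The main obstacle is establishing this Jacobi-type identity in the quaternion setting. I would derive it by passing to the Pfaffian representation via the relation $\qdet = \Pf$ recalled in Appendix \ref{app:qdet}, thereby reducing the statement to the classical Desnanot-Jacobi identity for Pfaffians of antisymmetric matrices; alternatively, one can expand $\qdet$ directly along the row and column containing $(x,y)$, using the expansion properties of the quaternion determinant. The almost self-adjoint structure \eqref{eq:a-s-a} of the bordered matrix is precisely what keeps the manipulation within the class of matrices for which $\qdet$ is defined, and it is the reason this notion was recalled earlier.

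Finally, the strict positivity $D_m > 0$ and the self-adjointness required to iterate the construction follow from the factorisation $D_m = D_{m-1}\cdot K^{x_1,\ldots,x_{m-1}}(x_m,x_m)$ together with the positivity clause of Theorem \ref{prop:Palm1} applied to $\P_{K^{x_1,\ldots,x_{m-1}}}$; hence, for $\lambda_m$-almost every base point $(x_1,\ldots,x_m)$, the recursion may be iterated and the formula \eqref{eq:corr_Palm_gen} is justified.
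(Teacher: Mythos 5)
Your overall strategy --- induction on $m$ via the tower property $\P_K^{x_1,\dots,x_m}=(\P_K^{x_1,\dots,x_{m-1}})^{x_m}$ together with repeated application of Theorem \ref{prop:Palm1}, plus the factorisation $D_m=D_{m-1}\,K^{x_1,\dots,x_{m-1}}(x_m,x_m)$ for the positivity claim --- is exactly the skeleton of the paper's argument. The gap lies in where you choose to verify the kernel identity. You reduce everything to the pointwise quaternion identity
\[
D_{m-1}\,N_m(x,y)\;=\;D_m\,N_{m-1}(x,y)\;-\;N_{m-1}(x,x_m)\,N_{m-1}(x_m,y),
\]
a Desnanot--Jacobi-type statement for Dyson--Moore determinants of \emph{almost} self-adjoint bordered matrices, and you leave it unproven; neither of your suggested routes goes through as stated. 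The correspondence $\qdet M=\Pf(-Y_n\varphi(M))$ of \eqref{qdet-pf} is available only for \emph{self-adjoint} $M$: for $x\neq y$ the bordered matrices $N_j(x,y)$ are merely almost self-adjoint, $\psi(N_j)$ is not skew-symmetric, and its Pfaffian is undefined. Moreover $N_{m-1}(x,x_m)$ and $N_{m-1}(x_m,y)$ are genuine (non-scalar) quaternions, so their product is order-sensitive: the commutative Desnanot--Jacobi identity cannot simply be imported, and the Pfaffian minor identity of Tanner/Ohta has three terms rather than two. A direct expansion along the bordering row and column would have to track Dyson's factor-ordering conventions with great care --- already for $m=1$ the two expressions in \eqref{eq:corr_Palm_Pf1} differ by the order of the factors $K(x,x_0)$ and $K(x_0,y)$, and your identity must survive exactly this kind of reordering for every $m$.

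The paper sidesteps all of this by verifying the Palm property at the level of correlation functions rather than of the kernel value at a single pair $(x,y)$. What is actually needed is the factorisation
\[
\qdet\left[K(x_i,x_j)\right]_{i,j=0}^{n}\;=\;\qdet\left[K(x_i,x_j)\right]_{i,j=0}^{m-1}\cdot\qdet\left[K^{x_0,\dots,x_{m-1}}(x_i,x_j)\right]_{i,j=m}^{n},
\]
where $K^{x_0,\dots,x_{m-1}}$ denotes the iterated one-step kernel; every matrix occurring here is \emph{self-adjoint}, so the row/column-operation calculus of Lemma \ref{lem:e1} and the block formula of Lemma \ref{lem:e2} apply without any new identity. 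The factorisation is obtained simply by iterating the Claim \eqref{eq:Qdet_identity} from the proof of Theorem \ref{prop:Palm1} $m$ times (and applying the same block-diagonalisation to the $m\times m$ minor to identify the product of diagonal factors with $D_m$, which is your positivity step). I would recommend you reroute the induction through this correlation-function identity; if you insist on the pointwise quaternion Jacobi identity, it must be stated and proved separately, with the ordering conventions made explicit.
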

\begin{rmk}\label{rem-almost-sa}
Note that we write the kernel $K^{x_1,\dots,x_m}(x,y)$ as a ratio of quaternion determinants. The $m\times m$ quaternion matrix in the denominator is self-adjoint; the 
quaternion matrix in the numerator is almost self-adjoint. 
\end{rmk}

\subsection{Multiplicative functionals}
Suppose $g(x)$ is a complex-valued function on $E$ such that $g-1$ is compactly supported. Then $g(x)$ defines a multiplicative  functional $\Psi_g(X)$ on $\Conf(E)$  by the formula
\[
\Psi_g(X)=\prod_{x\in X}g(x),\quad X\in\Conf(E).
\]

We  prove that the product of a Pfaffian measure with a non-negative multiplicative functional on $\Conf(E)$ is, after normalisation, again a Pfaffian measure on $\Conf(E)$ induced by an operator that  can be written explicitly.  
In the determinantal case, preservation of the determinantal property under taking the product with a multiplicative functional is established in  \cite{Bufetov12}; see also \cite{Bufetov13, Bufetov18}.

Equip $\H_\C$ with the inner product 
\[
\langle q, q'\rangle = q_0 \bar{q_0'} + q_1 \bar{q_1'} + q_2 \bar{q_2'} + q_3 \bar{q_3'} \in \C.
\]
Then $\H_\C$ is a complex Hilbert space.  Let $L^2(E,\mu; \H_\C)$ be the Hilbert space~\cite{Viswanath71} of $\H_\C$-valued square integrable functions on $E$ with respect to the measure $\mu$.

For a measurable bounded non-negative function $g: E\rightarrow \R_{+}$, if  the operator $1 + (g-1)K$ on $L^2(E, \mu; \H_\C)$ is invertible, then we set 
\begin{align}\label{def-K-g}
K^g: = \sqrt{g}K (1 + (g-1)K)^{-1} \sqrt{g}. 
 \end{align}
Since $K$ is quaternion self-adjoint, it is easy to check, using the elementary identity $K(1 + (g -1)K)^{-1}= (1 + K(g-1))^{-1}K$,  that  $K^g$ is also quaternion self-adjoint.

\begin{lem}\label{lem-K-g}
Assume that $g(x)$ is a  non-negative measurable function on $E$ such that $g-1$ is compactly supported and  $\E_{\P_K} \Psi_g > 0$.  Then the operator $1 + (g -1)K$ on $L^2(E, \mu; \H_\C)$ is invertible and, in particular, the operator $K^g$ can be defined. 
\end{lem}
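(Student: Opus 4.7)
The plan is a Fredholm--determinant argument mirroring the determinantal case of \cite{Bufetov12} but adapted to the quaternionic setting of this paper.

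I first verify that $(g-1)K$ is trace class on $L^2(E,\mu;\H_\C)$. Since $g-1$ is bounded and compactly supported on some bounded $B\subset E$, the operator $(g-1)K$ factors through the restriction $\mathbf 1_B K\mathbf 1_B$. Self-adjointness of the quaternion kernel forces $K(x,x)\in\C$ with $\rho_1(x)=K(x,x)$, so the local integrability $\int_B K(x,x)\,d\mu=\E_{\P_K}[\#(X\cap B)]<\infty$ (coming from local finiteness of configurations) gives the trace-class property. In particular, both the Fredholm quaternion determinant $\qdet(1+(g-1)K)$ and the complex Fredholm determinant $\det(1+(g-1)K)$ are well defined, and by the finite-dimensional identity $\qdet(M)^2=\det(M)$, extended to the trace class by continuity, they satisfy
\begin{equation*}
\qdet\bigl(1+(g-1)K\bigr)^2 \;=\; \det\bigl(1+(g-1)K\bigr).
\end{equation*}

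The key step is the Pfaffian analogue of the classical identity $\E[\Psi_g]=\det(1+(g-1)K)$, namely
\begin{equation*}
\E_{\P_K}\Psi_g \;=\; \qdet\bigl(1+(g-1)K\bigr),
\end{equation*}
understood as a Fredholm quaternion determinant. I would prove this by expanding
\begin{equation*}
\E_{\P_K}\Psi_g \;=\; \sum_{k\ge0}\frac{1}{k!}\int_{E^k}\prod_{i=1}^k(g(x_i)-1)\,\qdet[K(x_i,x_j)]_{i,j=1}^k\,d\mu^{\otimes k}
\end{equation*}
and matching it term by term with the Fredholm expansion of the right-hand side, using the Pfaffian form of the quaternion determinant recorded in~\eqref{qdet-pf}. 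The identification is purely algebraic after discretising $K$ to a finite grid (at which point it becomes the classical Pfaffian--determinant correspondence for the $2\times 2$-matricial form of $K$), and it extends to the general trace-class setting by approximation and continuity of the Fredholm determinant.

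Combining the two steps, the hypothesis $\E_{\P_K}\Psi_g>0$ forces $\det(1+(g-1)K)\ne 0$, and the Fredholm alternative for trace-class perturbations of the identity then yields invertibility of $1+(g-1)K$ on $L^2(E,\mu;\H_\C)$. The main obstacle is the Fredholm quaternion-determinant identity of the second step: one has to carry out the bookkeeping that converts a series of quaternion determinants into the standard symplectic/matricial form (where the antisymmetry condition~\eqref{anti-sym}, combined with $\Pf(M)^2=\det(M)$, delivers the passage to the complex Fredholm determinant), and then justify the limit from finite-rank approximations of $K$ while preserving the positivity of $\E_{\P_K}\Psi_g$ along the approximation.
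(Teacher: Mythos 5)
Your argument follows essentially the same route as the paper's: the identity $\E_{\P_K}\Psi_g=\qdet(1+\sqrt{g-1}\,K\sqrt{g-1})$ (which the paper simply cites from Rains as Lemma \ref{lem-mult-fl} rather than re-deriving by series expansion, as you propose), the squaring identity \eqref{pf-det} converting the quaternion Fredholm determinant into an ordinary complex one, and then the Fredholm alternative. One point in your write-up needs repair rather than just polish: you place $(g-1)K$ inside the quaternion determinant, but $(g-1)K$ is \emph{not} a self-adjoint quaternion kernel (one has $\overline{(g(y)-1)K(y,x)}=(g(y)-1)K(x,y)\neq(g(x)-1)K(x,y)$ in general), so $\qdet\bigl(1+(g-1)K\bigr)$ is simply not defined in the framework of Appendix \ref{app:Fredholmqdet}; the same defect makes $(g-1)K=(g-1)\chi_B K$ not obviously trace class, whereas the symmetrized kernel $\sqrt{g-1}\,K\sqrt{g-1}=\sqrt{g-1}\,\chi_B K\chi_B\sqrt{g-1}$ is both quaternion self-adjoint (independently of the branch of the square root, even though $g-1$ changes sign) and trace class by local trace-classness of $K$. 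Likewise your ``complex Fredholm determinant of $1+(g-1)K$'' must be read as $\det\bigl(1+\varphi(\sqrt{g-1}\,K\sqrt{g-1})\bigr)$ on $L^2(E,\mu;\C^2)$, since the complex determinant of an operator on the quaternionic Hilbert space is only defined through the representation $\varphi$. With the symmetrized kernel used throughout, your argument yields invertibility of $1+\sqrt{g-1}\,K\sqrt{g-1}$, and the final passage to $1+(g-1)K$ is the standard fact that $1+AB$ is invertible if and only if $1+BA$ is; this last reduction is present (implicitly) in the paper's proof and missing from yours only because you suppressed the symmetrization.
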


\begin{thm}\label{thm-mul-f}
Assume that $g(x)$ is a  non-negative measurable function on $E$ such that $g-1$ is compactly supported and $\E_{\P_K} \Psi_g > 0$.  Then the Pfaffian measure $\Psi_g \P_K$ on $\Conf(E)$, after normalization, is a Pfaffian measure with correlation kernel $K^g$. That is, we have 
\begin{align}\label{norm-mul-pp}
\frac{\Psi_g\P_K}{\E_{\P_K}\Psi_g}=\P_{K^g}.
\end{align}
\end{thm}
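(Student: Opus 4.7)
The plan is to transport to the Pfaffian setting the strategy used for determinantal point processes in \cite{Bufetov12}. The key step is a \emph{generating functional identity}
$$\E_{\P_K}\Psi_g = \qdet\bigl(1+(g-1)K\bigr),$$
where on the right the symbol $\qdet$ denotes the Fredholm-type extension of the Dyson--Moore determinant to the trace-class operator $(g-1)K$ on $L^2(E,\mu;\H_\C)$. I would prove this by expanding $\Psi_g(X)=\prod_{x\in X}(1+(g(x)-1))$ over the finite subsets of $X\cap\operatorname{supp}(g-1)$, taking the $\P_K$-expectation, and invoking the definition of correlation functions to identify the resulting series
$$1+\sum_{n\geq 1}\frac{1}{n!}\int_{E^n}\prod_{i=1}^n(g(x_i)-1)\,\qdet[K(x_i,x_j)]_{i,j=1}^n\,d\mu^{\otimes n}$$
with the Fredholm expansion of $\qdet(1+(g-1)K)$. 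Compact support of $g-1$ makes $(g-1)K$ trace-class and guarantees absolute convergence. The same identity immediately yields Lemma \ref{lem-K-g}: under $\E_{\P_K}\Psi_g>0$ the Fredholm determinant is nonzero and hence $1+(g-1)K$ is invertible, so $K^g$ is well defined.

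Next, I compute the correlation functions $\widetilde{\rho}_k$ of $\widetilde{\P}:=\Psi_g\P_K/\E_{\P_K}\Psi_g$ via Campbell's formula. Combined with the Palm relation \eqref{eq:corr_Palm}, this gives
$$\widetilde{\rho}_k(x_1,\dots,x_k) = \frac{g(x_1)\cdots g(x_k)\,\rho_k(x_1,\dots,x_k)\,\E_{\P_K^{x_1,\dots,x_k}}\Psi_g}{\E_{\P_K}\Psi_g},$$
since $\Psi_g(X\cup\{x_1,\dots,x_k\})=g(x_1)\cdots g(x_k)\Psi_g(X)$ and the $\P_K$-almost sure simplicity of configurations allows one to replace the sum over distinct ordered tuples by integration against the $k$-th correlation measure. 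Corollary \ref{cor-m-order-palm} identifies the Palm measure $\P_K^{x_1,\dots,x_k}$ as the Pfaffian process with kernel $K^{x_1,\dots,x_k}$, and a second application of the generating functional identity rewrites the expression as
$$\widetilde{\rho}_k(x_1,\dots,x_k) = g(x_1)\cdots g(x_k)\,\qdet[K(x_i,x_j)]_{i,j=1}^k\,\frac{\qdet(1+(g-1)K^{x_1,\dots,x_k})}{\qdet(1+(g-1)K)}.$$

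It therefore suffices to identify the right-hand side with $\qdet[K^g(x_i,x_j)]_{i,j=1}^k$. Setting $R:=K(1+(g-1)K)^{-1}$ so that $K^g(x,y)=\sqrt{g(x)}\,R(x,y)\sqrt{g(y)}$, I would first dispatch the case $k=1$, which reduces to a rank-one perturbation identity for Fredholm quaternion determinants of the shape
$$\frac{\qdet(1+(g-1)K^{x})}{\qdet(1+(g-1)K)}=\frac{R(x,x)}{K(x,x)},$$
and then iterate via the tower property $(K^{x_1,\dots,x_{k-1}})^{x_k}=K^{x_1,\dots,x_k}$, which follows from two successive applications of Corollary \ref{cor-m-order-palm}. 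The main obstacle is establishing this rank-one identity in the quaternion setting: the Dyson--Moore quaternion determinant is not a multilinear determinant, so classical cofactor and matrix-determinant-lemma manipulations do not apply directly. I would circumvent this by using identity \eqref{qdet-pf} in Appendix \ref{app:qdet} to relate $\qdet$ to Pfaffians of antisymmetric $2\times 2$-block complex matrices; the rank-one update then becomes a classical Schur-complement statement for Pfaffians, after which lifting back to the quaternion formulation and tracking the correct sign/branch completes the proof.
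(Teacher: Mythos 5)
Your route is genuinely different from the paper's. The paper never computes the correlation functions of $\Psi_g\P_K/\E_{\P_K}\Psi_g$ directly: it computes the generating functional $\E_{\P_K}[\Psi_h\Psi_g]/\E_{\P_K}\Psi_g$ for an arbitrary test function $h$, using the operator factorization $1+(gh-1)K=\bigl(1+(h-1)gK(1+(g-1)K)^{-1}\bigr)\bigl(1+(g-1)K\bigr)$, the squared identity \eqref{pf-det} to transfer everything to ordinary Fredholm determinants (where multiplicativity is available), and then a holomorphic deformation $h_z=1+z(h-1)$ together with Lemma \ref{lem-holo} to extract the square root with the correct global sign; the conclusion follows because, by Lemma \ref{lem-mult-fl}, the generating functional characterizes the Pfaffian process. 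Your plan --- Campbell/Palm computation of $\widetilde{\rho}_k$, reduction to a rank-one update identity for the ratio of quaternion Fredholm determinants, then telescoping through the tower property --- is the Shirai--Takahashi-style argument; its skeleton is sound, and your formula for $\widetilde{\rho}_k$ is correct.

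The gap sits exactly at the step you flag as ``the main obstacle,'' and it is not bookkeeping. The only bridge the paper provides between quaternion Fredholm determinants and classical objects is \eqref{pf-det}, which determines your ratio only up to sign; \eqref{qdet-pf} is a finite-matrix identity, and no Fredholm Pfaffian is set up here, so there is nothing to run a ``Schur complement for Pfaffians'' on in infinite dimensions. Fixing that sign is precisely the delicate point --- the paper's entire $h_z$/entire-function argument exists for this purpose --- so ``tracking the correct sign/branch'' cannot be deferred: you would need either a genuine Fredholm Pfaffian formalism (as in Rains) or a connectedness/holomorphy argument of your own. Note also that under $\varphi$ the quaternion rank-one perturbation $K^{x}-K=-K(\cdot,x)K(x,\cdot)/K(x,x)$ becomes a rank-\emph{two} complex perturbation, so even the complexified update is a $2\times2$ matrix-determinant-lemma computation rather than a scalar one. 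Two further points need care: the telescoping step tacitly identifies the diagonal of $(K^{g})^{x_1,\dots,x_{j-1}}$ with that of $(K^{x_1,\dots,x_{j-1}})^{g}$, a commutation of the Palm and $g$-transforms that requires its own operator-level proof and must not be imported from the theorem being proved; and concluding $\widetilde{\P}=\P_{K^g}$ from equality of correlation functions needs a uniqueness statement, which the paper sidesteps by working with generating functionals via Lemma \ref{lem-mult-fl}. Finally, $\qdet\bigl(1+(g-1)K\bigr)$ is not covered by \eqref{def-fred-qdet} since $(g-1)K$ is not quaternion self-adjoint; write $\qdet\bigl(1+\sqrt{g-1}\,K\sqrt{g-1}\bigr)$ throughout.
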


\subsection{Conditional measures}

We briefly recall the definition of conditional measures for point processes. 

Let $\P$ be a Borel probability measure on $\Conf(E)$.  Take a Borel subset $W\subset E$. Define the map $ \pi_W:  \Conf(E)  \rightarrow \Conf(W)$ by $\pi_W(X) = X\cap W$.  By  disintegrating the probability measure $\P$ with respect to the map $\pi_W$, for $(\pi_W)_{*}(\P)$-almost every configuration $X_{0}\in \Conf(W)$,  there exists a probability measure, denoted by $\P(\cdot| X_{0}, W)$,  supported on  the following fiber of $\pi_W$: 
\[
\pi_W^{-1}(X_0) =  \left\{Y \in \Conf(E)\,|\, Y \cap W  = X_0 \right\} = \left\{X_0 \sqcup Z\,|\,  Z \in \Conf(E\setminus W)\right\},
\]
  such that
\[
\P= \int_{\Conf(W)}  \P(\cdot| X_{0}, W)   (\pi_W)_{*}(\P) (  d X_{0}).
\]
Then, using the natural identification 
\[
\left\{X_0 \sqcup Z\,|\,  Z \in \Conf(E\setminus W)\right\}  \simeq \Conf(E\setminus W), 
\]
we also identify the probability measure $\P(\cdot| X_{0}, W)$ as a probability measure on the space $\Conf(E\setminus W)$ and  will be  referred to as the  conditional measure on $\Conf(E\setminus W)$ or conditional point process on $E\setminus W$ of the original point process $\P$, the condition being that the configuration on $W$ coincides with  the given $X_0$. In what follows, for $\P$-almost every configuration $X\in \Conf(E)$, we denote also
\[
\P (\cdot | X, W) = \P(\cdot |  X\cap W, W).
\]

We shall consider the conditional measures of Pfaffian point processes with respect to the conditioning that the configuration on a fixed   bounded measurable set is given, that is, we will take $W = B$ a bounded measurable set and consider the conditional measure 
\[
\P_K(\cdot | X, B)
\]
for a Pfaffian point process  $\P_K$ on $E$.

If $B\subset E$ is a bounded measurable set, then  for any simple configuration  $X\in\Conf(E)$, the subset $X\cap B \subset E$ is finite $X\cap B=\{x_1,\cdots,x_m\}$ and we will use the following notation:  
\[
\P_K^{X\cap B}=\P_K^{x_1,\ldots,x_m}, \quad  K^{X\cap B}=K^{x_1,\ldots,x_m}. 
\]

\begin{thm}\label{thm-cond-measure}
Let $B\subset E$ be a bounded Borel subset. Then, for $\P_K$-almost every $X\in\Conf(E)$, the operator $1 - \chi_BK^{X\cap B}$ is invertible and the conditional measure 
 $\P_K(\cdot|X,B)$ is again a  Pfaffian point process on $E\setminus B$  with a correlation kernel given by  
\[
K^{[X,B]} = \chi_{E \setminus B}K^{X\cap B}(1 - \chi_BK^{X\cap B})^{-1}\chi_{ E \setminus B}
\]
 In formula, for $\P_K$-almost every $X\in\Conf(E)$, we have 
\[
\P_K(\cdot|X,B) =\P_{K^{[X,B]}}.
\] 
\end{thm}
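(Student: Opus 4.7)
The plan is to reduce Theorem~\ref{thm-cond-measure} to a combination of Corollary~\ref{cor-m-order-palm} and Theorem~\ref{thm-mul-f} by expressing the conditional measure $\P_K(\cdot|X,B)$ in terms of a Palm measure and a multiplicative functional. The key structural identity I would first establish is: for $\P_K$-almost every $X$ with $X\cap B = \{x_1,\ldots,x_m\}$,
\[
\P_K(\,\cdot\,|X,B) = \frac{\mathbf{1}_{\{Y\cap B=\emptyset\}}\,d\P_K^{x_1,\ldots,x_m}(Y)}{\P_K^{x_1,\ldots,x_m}(\{Y:Y\cap B=\emptyset\})},
\]
i.e.\ the Palm measure at $x_1,\ldots,x_m$ restricted and renormalised to configurations avoiding $B$.

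To prove this identity, I would apply the reduced Palm (Campbell--Mecke) formula recalled in Section~1.2: for any bounded non-negative Borel $F$ on $\Conf(B)\times\Conf(E\setminus B)$ and any $m\geq 0$,
\[
m!\,\E_{\P_K}\bigl[\mathbf{1}_{\{|X\cap B|=m\}}F(X\cap B, X\setminus B)\bigr] = \int_{B^m}\rho_m(\mathfrak{q})\,\E_{\P_K^{\mathfrak{q}}}\bigl[F(\{\mathfrak{q}\},Y)\mathbf{1}_{\{Y\cap B=\emptyset\}}\bigr]\,d\mu^{\otimes m}(\mathfrak{q}),
\]
obtained by rewriting $m!\,\mathbf{1}_{\{|X\cap B|=m\}}F$ as a sum over ordered $m$-tuples $(q_1,\ldots,q_m)$ of distinct points of $X$ exhausting $X\cap B$ and then invoking the defining disintegration of the $m$-th Palm measure. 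Summing over $m\geq 0$ and appealing to the uniqueness of disintegration along the map $\pi_B:X\mapsto X\cap B$ yields the claimed identity. Positivity of the normaliser $\P_K^{X\cap B}(Y\cap B=\emptyset)$ for $\P_K$-a.e.\ $X$ is a by-product of this derivation: if it vanished on a set of positive $(\pi_B)_*\P_K$-measure, the formula above would contradict the presence of the corresponding configurations under $\P_K$.

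With this identity established, I would set $g:=\chi_{E\setminus B}$, so that $g-1=-\chi_B$ is compactly supported, $\sqrt{g}=g$, and $\Psi_g(Y)=\mathbf{1}_{\{Y\cap B=\emptyset\}}$. By Corollary~\ref{cor-m-order-palm}, the Palm measure $\P_K^{X\cap B}$ coincides with the Pfaffian point process $\P_{K^{X\cap B}}$. Applying Theorem~\ref{thm-mul-f} to this Pfaffian process with multiplicative functional $\Psi_g$ --- whose normalising constant is positive by the previous paragraph, so that Lemma~\ref{lem-K-g} furnishes invertibility of $1-\chi_B K^{X\cap B}$ on $L^2(E,\mu;\H_\C)$ --- shows that the normalised product $\Psi_g\P_{K^{X\cap B}}/\E_{\P_{K^{X\cap B}}}\Psi_g$ is Pfaffian with kernel
\[
(K^{X\cap B})^g = \chi_{E\setminus B}\,K^{X\cap B}\bigl(1-\chi_B K^{X\cap B}\bigr)^{-1}\chi_{E\setminus B} = K^{[X,B]},
\]
which, combined with the structural identity above, yields $\P_K(\cdot|X,B)=\P_{K^{[X,B]}}$.

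The main obstacle, in my view, is the Palm-measure representation of the conditional measure in the second paragraph. Although this identity is folkloric in point-process theory, a clean implementation requires handling all cardinalities $m\geq 0$ simultaneously, verifying joint measurability in $(X,\mathfrak{q})$ of the Palm kernels produced by Corollary~\ref{cor-m-order-palm}, and invoking uniqueness of regular conditional probabilities for the disintegration along $\pi_B$. Once this is accomplished, the remaining steps are an essentially formal combination of the two theorems already established in the paper.
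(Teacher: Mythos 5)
Your proposal is correct and follows essentially the same route as the paper: identify $\P_K(\cdot|X,B)$ with the Palm measure $\P_K^{X\cap B}=\P_{K^{X\cap B}}$ restricted and renormalised to configurations avoiding $B$, then apply Theorem~\ref{thm-mul-f} with $g=\chi_{E\setminus B}$ to obtain the kernel $K^{[X,B]}$. The only difference is that the structural identity you propose to derive by hand from the Campbell--Mecke formula (together with the positivity of the normaliser) is exactly Proposition~\ref{prop-cond-m}, which the paper simply cites from \cite[Proposition 8.1]{Bufetov19}, and the paper additionally makes the normaliser explicit as $\qdet(1-\chi_B K^{X\cap B}\chi_B)>0$ to get the invertibility of $1-\chi_B K^{X\cap B}$, which is equivalent to your appeal to Lemma~\ref{lem-K-g}.
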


%\section{Classical Examples of Pfaffian point processes}

\section{Correlation kernels of the Palm measures}
In this section, we will prove Theorem \ref{prop:Palm1} and Corollary \ref{cor-m-order-palm}.

\begin{proof}[Proof of Theorem \ref{prop:Palm1}]
First, by the definition of Pfaffian point process, the first correlation measure is given by
\[
d\lambda_1(x) = K(x, x) d\mu(x). 
\]
Therefore, for $\lambda_1$-almost every $x \in E$,  we have $K(x, x) > 0$. Clearly, by the definition \eqref{eq:corr_Palm_Pf1} of the kernel $K^{x_0}$, we have 
\[
K^{x_0}(x,x)\in \R  \text{\, and \,} K^{x_0}(x,y)=\overline{K^{x_0}(y,x)}.
\] 
Hence $K^{x_0}$ defines a self-adjoint quaternion kernel. 
From \eqref{eq:corr_Palm} we have
$$
\rho_n^{x_0}(x_1,\dots,x_n)=\frac{1}{\rho_1(x_0)}\rho_{n+1}(x_0,x_1,\dots,x_n).
$$
\begin{claim} For all $n\geq1$, we have 
\be
\qdet\left[K^{x_0}(x_i,x_j)\right]_{i,j=1}^n=\frac{\qdet\left[K(x_i,x_j)\right]_{i,j=0}^n}{K(x_0,x_0)}.
\label{eq:Qdet_identity}
\ee
\end{claim}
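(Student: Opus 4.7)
The plan is to recognize the claim as a Schur-complement identity for the Dyson--Moore determinant. Write $M = [K(x_i, x_j)]_{i,j=0}^n$, a self-adjoint $(n+1) \times (n+1)$ quaternion matrix. By construction,
$$K^{x_0}(x_i, x_j) = K(x_i, x_j) - K(x_i, x_0)\, K(x_0, x_0)^{-1} K(x_0, x_j),$$
so $[K^{x_0}(x_i, x_j)]_{i,j=1}^n$ is exactly the Schur complement of the scalar $(0,0)$-entry of $M$. Consequently \eqref{eq:Qdet_identity} is equivalent to the factorisation
$$\qdet M \;=\; K(x_0, x_0)\,\qdet\bigl[K^{x_0}(x_i, x_j)\bigr]_{i,j=1}^n.$$

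To prove this I would perform symmetric Gauss elimination with the scalar pivot $K(x_0, x_0)\in\R_{>0}$: first the row operations $R_i \mapsto R_i - K(x_i, x_0)\,K(x_0, x_0)^{-1} R_0$ for $i \geq 1$, which zero out the first column below $(0,0)$; then the column operations $C_j \mapsto C_j - C_0\, K(x_0, x_0)^{-1} K(x_0, x_j)$ for $j \geq 1$, which zero out the first row to the right of $(0,0)$ without disturbing the zeros just produced, since after the row step the first column is $(K(x_0,x_0), 0, \ldots, 0)^t$. Because $K(x_0, x_0)^{-1}$ is real and hence central in $\H_\C$, these are genuine elementary operations in the quaternion setting, and the resulting matrix is block-diagonal with blocks $K(x_0, x_0)$ and $[K^{x_0}(x_i, x_j)]_{i,j=1}^n$. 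Invariance of $\qdet$ under these operations, together with multiplicativity of $\qdet$ on block-diagonal matrices---both to be invoked from Appendix~\ref{app:qdet}---yields the desired factorisation.

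The main subtlety is that after only the row step the intermediate matrix is no longer self-adjoint but merely almost self-adjoint in the sense of \eqref{eq:a-s-a}, so one needs the $\qdet$ invariance in that broader setting; this is precisely the framework recalled in Appendix~\ref{app:qdet} and Remark~\ref{rem-almost-sa}. A cleaner fallback is to translate the identity into Pfaffians via the equality $\qdet = \Pf$ of Appendix~\ref{app:qdet} and apply the classical Pfaffian Schur-complement formula
$$\Pf\!\begin{pmatrix} A_{11} & B \\ -B^{t} & A_{22} \end{pmatrix} \;=\; \Pf(A_{11})\,\Pf\!\bigl(A_{22} + B^{t} A_{11}^{-1} B\bigr)$$
to the $2(n+1) \times 2(n+1)$ antisymmetric matrix associated with $M$, using that the $2 \times 2$ block corresponding to $K(x_0, x_0)$ has Pfaffian equal to $K(x_0, x_0)$. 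This route bypasses all quaternion row/column manipulations and reduces \eqref{eq:Qdet_identity} to an elementary block Pfaffian computation.
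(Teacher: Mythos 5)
Your proposal is correct and follows essentially the same route as the paper: symmetric Gaussian elimination with the scalar pivot $K(x_0,x_0)$, invariance of $\qdet$ under elementary operations (Lemma~\ref{lem:e1}), and multiplicativity on block-diagonal matrices (Lemma~\ref{lem:e2}). The only wrinkle is your ordering (all row operations first, then all column operations), which passes through a merely almost self-adjoint intermediate matrix; the paper avoids this by performing each column operation together with its conjugate row operation, i.e.\ conjugating by $I+qE_{ij}$ as in Lemma~\ref{lem:e1}(iii), so the matrix stays self-adjoint throughout and no extended invariance statement is needed.
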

If we accept the claim, we conclude that
$$
\rho_n^{x_0}(x_1,\dots,x_n)=\qdet\left[K^{x_0}(x_i,x_j)\right]_{i,j=1}^n,
$$
for all $n\geq1$.
It remains to prove the claimed identity \eqref{eq:Qdet_identity}.

To save writing we use the notation
$$
M_{ij}=K(x_i,x_j),\qquad M^0_{ij}=K^{x_0}(x_i,x_j)=M_{ij}-\frac{M_{i0}M_{0j}}{M_{00}}.
$$
%The quaternion-determinant behaves as the ordinary determinant under elementary operations on rows and columns. In particular, if $M'$ is the matrix obtained by adding a multiple of any column of $M$ to a different column
%of $M$, then $\qdet M'=\qdet M$.
%$$
%\qdet M'=\qdet M.%The following well-known elementary facts on determinants extend in the quaternion case.
%\begin{lem}[Elementary column operations and quaternion-determinants] Let $M$  be a self-dual matrix. Then,
%\begin{enumerate}
%\item If $M'$ is the matrix obtained by permuting two columns of $M$, then
%$$
%\qdet M'=-\qdet M.
%$$
%\item If $M'$ is the matrix obtained by multiplying one column of A by any scalar $\alpha$, then
%$$
%\qdet M'=\alpha\qdet M.
%$$
%\item If $M'$ is the matrix obtained by adding a multiple of any column of $M$ to a different column
%of $M$, then
%$$
%\qdet M'=\qdet M.
%$$
%\end{enumerate}
%\end{lem}
Consider the matrix $(M_{ij})_{i,j=0}^n$ and perform the following sequence of column/row operations:   add the $0$-th column right-multiplied by $(-M_{01}/M_{00})$  to column $1$, add the $0$-th row left-multiplied by $(-M_{01}/M_{00})$  to row $1$; add the $0$-th column right-multiplied by $(-M_{02}/M_{00})$  to column $2$, add the $0$-th row left-multiplied by $(-M_{02}/M_{00})$  to row $2$; and so on. As a result we get the a self-adjoint matrix with block structure
\begin{multline}
M'=
\begin{bmatrix}
M_{00} &0 & 0 &\cdots &0\\
0 &M_{11}-\frac{M_{10}M_{01}}{M_{00}}  &M_{12}-\frac{M_{10}M_{02}}{M_{00}}  &\cdots &M_{1n}-\frac{M_{10}M_{0n}}{M_{00} }\\
0 &M_{11}-\frac{M_{20}M_{01}}{M_{00}}  &M_{22}-\frac{M_{20}M_{02}}{M_{00}}   &\cdots &M_{2n}-\frac{M_{20}M_{0n}}{M_{00}} \\
\vdots\\
0 &M_{n1}-\frac{M_{n0}M_{01}}{M_{00}} &M_{11}-\frac{M_{n0}M_{02}}{M_{00}}  &\cdots &M_{nn}-\frac{M_{n0}M_{0n}}{M_{00}} 
\end{bmatrix}\\
=
\left[
\begin{array}{c|c}
  M_{00}  & 0 \cdots 0 \\ \hline
  0  & \raisebox{-15pt}{{\large\mbox{{$(M^0_{ij})_{i,j=1}^n$}}}} \\[-4ex]
  \vdots & \\[-0.5ex]
  0  &
\end{array}
\right].
\end{multline}
By Lemma \ref{lem:e1} and Lemma \ref{lem:e2}, we conclude that
$$
\qdet [M_{ij}]_{i,j=0}^n=\qdet [M'_{ij}]_{i,j=0}^n=M_{00}\qdet [M^0_{ij}]_{i,j=1}^n.
$$
This is exactly the claim identity \eqref{eq:Qdet_identity}. 
\end{proof}

\begin{proof}[Proof of Corollary \ref{cor-m-order-palm}]
First, note that $K^{x_1,\dots,x_m}(x,y)=\overline{K^{x_1,\dots,x_m}(y,x)}$.  That is, the kernel $K^{x_1, \cdots, x_m}$ is quaternion self-adjoint. 
%We proceed by induction. The statement is true for $m=1$ (Proposition~\ref{prop:Palm1}). Suppose that $\P_K^{x_1,\dots,x_m}$ has correlation kernel $K^{x_1,\dots,x_m}(x,y)$ given by~\eqref{eq:corr_Palm_gen}, and consider the measure  $\P_K^{x_0,x_1\dots,x_m}=(\P_K^{x_1,\dots,x_m})^{x_{0}}$. By Proposition~\ref{prop:Palm1}, if $K^{x_1,\dots,x_m}(x_0,x_0)>0$, then $\P_K^{x_0,x_1,\dots,x_m}$ is a Pfaffian process with correlation kernel
%\begin{multline*}
%K^{x_0,x_1\dots,x_m}(x,y)=(K^{x_1\dots,x_m})^{x_0}(x,y)\\
%=\frac{1}{\qdet\left(K^{x_1\dots,x_m}(x_0,x_0)\right)}\qdet
%\begin{pmatrix}
%K^{x_1\dots,x_m}(x,y) & K^{x_1\dots,x_m}(x_0,y)\\
%K^{x_1\dots,x_m}(x,x_0) & K^{x_1\dots,x_m}(x_0,x_0)
%\end{pmatrix}\\
%=\frac{\qdet\left(K(x_i,x_j)\right)_{i,j=1}^m}{\qdet\left(K(x_i,x_j)\right)_{i,j=0}^m}\qdet
%\begin{pmatrix}
%K^{x_1\dots,x_m}(x,y) & K^{x_1\dots,x_m}(x_0,y)\\
%K^{x_1\dots,x_m}(x,x_0) & K^{x_1\dots,x_m}(x_0,x_0)
%\end{pmatrix}\\
%=\frac{1}{\qdet\left(K(x_i,x_j)\right)_{i,j=0}^m}\qdet
%\begin{pmatrix}
%K(x,y) & K(x_0,y) & K(x_1,y)& \cdots & K(x_m,y)\\
%K(x,x_0) & K(x_0,x_0) & K(x_1,x_0)& \cdots & K(x_m,x_0)\\
%K(x,x_1) & K(x_0,x_1)& K(x_1,x_1)& \cdots & K(x_m,x_1)\\
%\vdots & & &\ddots\\
%K(x,x_m) & K(x_0,x_m)& K(x_1,x_m) &\cdots &K(x_m,x_m)
%\end{pmatrix}.
%\end{multline*}
From~\eqref{eq:corr_Palm} we have ($n>m$):
$$
\rho_n^{x_0,x_1,\dots,x_{m-1}}(x_{m},\dots,x_{n})=\frac{\rho_{n+1}(x_0,x_1,\dots,x_n)}{\rho_{m}(x_0,x_1,\dots,x_{m-1})}.
$$
Therefore we need to show that
\begin{align}\label{eq:Qdet_identity2}
\qdet\left[K^{x_0,\dots,x_{m-1}}(x_i,x_j)\right]_{i,j=m}^{n}=\frac{\qdet\left[K(x_i,x_j)\right]_{i,j=0}^n}{\qdet\left[K(x_i,x_j)\right]_{i,j=0}^{m-1}}.
\end{align}
 %We proceed by induction. The statement is true for $m=1$ (Proposition~\ref{prop:Palm1}). Fix $m\geq1$ and suppose that~\eqref{eq:Qdet_identity2} is true for $m$. 
 Consider the measure  
 $$
 \P_K^{x_0,x_1\dots,x_m}=(\P_K^{x_0,\dots,x_{m-1}})^{x_{m}}=((\P_K^{x_0,\dots,x_{m-2}})^{x_{m}})^{x_{m-1}}=\cdots=(\cdots(\P_K)^{x_m}\cdots)^{x_{0}}.
 $$ 
 By Theorem \ref{prop:Palm1}, if $K^{x_1,\dots,x_m}(x_0,x_0)>0$, then $\P_K^{x_0,x_1,\dots,x_m}$ is a Pfaffian point process with correlation kernel
$$
K^{x_0,x_1\dots,x_m}(x,y)=(\cdots(K)^{x_m}\cdots)^{x_0}(x,y).
$$
(Note that the order of the points $x_0,x_1,\dots,x_m$ is immaterial in the above iteration.)
Therefore,
\begin{multline}\label{one-way}
\qdet\left[K(x_i,x_j)\right]_{i,j=0}^n=K(x_0,x_0)\qdet\left[K^{x_0}(x_i,x_j)\right]_{i,j=1}^n\\
=K(x_0,x_0)K^{x_0}(x_1,x_1)\qdet\left[K^{x_0,x_1}(x_i,x_j)\right]_{i,j=2}^n\\
\vdots\\
=K(x_0,x_0)K^{x_0}(x_1,x_1)\cdots K^{x_0,\dots,x_{m-2}}(x_{m-1},x_{m-1})\qdet\left[K^{x_0,\dots,x_{m-1}}(x_i,x_j)\right]_{i,j=m}^n.
\end{multline}
Set
\begin{align*}
M_{ij}&=K(x_i,x_j)\\
 M^0_{ij}&=K^{x_0}(x_i,x_j)\\
M^{01}_{ij}&=\left(K^{x_0}\right)^{x_1}(x_i,x_j)=K^{x_0,x_1}(x_i,x_j)\\
\vdots\\
M^{0\dots m-2}_{ij}&=\left(\cdots\left(K^{x_0}\right)^{x_1}\cdots\right)^{x_{m-2}}(x_i,x_j)=K^{x_0,\dots,x_{m-2}}(x_i,x_j).
\end{align*}
Now, by iterated  column/row operations we have
\begin{multline*}
\qdet\left(M_{ij}\right)_{i,j=0}^{m-1}=
\qdet
\left[
\begin{array}{c|c}
  M_{00}  & 0 \cdots  \\ \hline
  0  & \raisebox{-15pt}{{\large\mbox{{$(M^0_{ij})_{i,j=1}^{m-1}$}}}} \\[-4ex]
  \vdots & \\
    &
\end{array}
\right]\\
=
\qdet
\left[
\begin{array}{c|c|ccc}
  M_{00}  & 0 &0 &\cdots& \\ \hline
  0  & M^0_{11} &0 &\cdots&0\\\hline
0 & 0  & &\raisebox{-15pt}{{\large\mbox{{$(M^{01}_{ij})_{i,j=2}^{m-1}$}}}} \\[-4ex]
  \vdots &\vdots \\[-0.5ex]
  &
\end{array}
\right]\\
=\cdots=
\qdet
\begin{bmatrix}
M_{00} &0 &\cdots & &  &0\\
0 & M^{0}_{11}& 0&\cdots &  & \\
\vdots & 0& M^{01}_{22}&0 &\cdots  & \\
 &\vdots &0 & \ddots& & \\
  & & \vdots& & &\\
0   & & & & & M^{0\dots m-2}_{m-1,m-1} 
\end{bmatrix}.
\end{multline*}
Thus, 
\be\label{another-way}
\qdet\left[K(x_i,x_j)\right]_{i,j=0}^{m-1}=K(x_0,x_0)K^{x_0}(x_1,x_1)\cdots K^{x_0,x_1,\dots,x_{m-2}}(x_{m-1},x_{m-1}).
\ee
Therefore, combining \eqref{one-way} and \eqref{another-way}, we obtain 
\[
\qdet\left[K(x_i,x_j)\right]_{i,j=0}^n = \qdet\left[K(x_i,x_j)\right]_{i,j=0}^{m-1}  \qdet\left[K^{x_0,\dots,x_{m-1}}(x_i,x_j)\right]_{i,j=m}^n.
\]
This proves the desired equality \eqref{eq:Qdet_identity2}. 
\end{proof}

\section{Multiplicative functionals and conditional measures}
In what follows, let $K: E\times E\rightarrow \H_\C$ be a self-adjoint quaternion kernel which induces a bounded  locally trace-class operator on the Hilbert space $L^2(E, \mu; \H_\C)$. Suppose that $K$ induces a Pfaffian point process $\P_K$ on $E$. 

We will need the definition of quaternion Fredholm determinant and its relation with the usual Fredholm determinant, which are collected in  Appendix~\ref{app:Fredholmqdet}.

The following lemma  characterizes the Pfaffian point processes. 
\begin{lem}[{see  Rains \cite[Theorem 8.2]{Rains}}]\label{lem-mult-fl}
  Suppose that $g(x)$ is a measurable bounded function on $E$, and such that $g(x)-1$ is compactly supported.  Then 
\begin{align}\label{def-pf-pp}
\E_{\P_K}\Psi_g=\qdet(1+\sqrt{g-1}K\sqrt{g-1}).
\end{align}
Moreover, the identity \eqref{def-pf-pp} characterizes the Pfaffian point process $\P_K$.  
\end{lem}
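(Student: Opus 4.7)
The plan is to evaluate $\E_{\P_K}\Psi_g$ by the standard expansion of a multiplicative functional into a sum over finite subconfigurations, substitute the Pfaffian form of the correlation functions, and recognise the resulting series as the expansion of the quaternion Fredholm determinant appearing on the right of \eqref{def-pf-pp}. Since $g-1$ is supported in some bounded set $B$, for $\P_K$-almost every $X$ the product $\prod_{x\in X}g(x)$ involves only finitely many non-trivial factors, so
\[
\Psi_g(X) = \prod_{x\in X}\bigl(1+(g(x)-1)\bigr) = \sum_{k=0}^\infty \frac{1}{k!}\sum_{x_1,\dots,x_k\in X}^{*}\prod_{i=1}^k(g(x_i)-1).
\]
Applying $\E_{\P_K}$ termwise and invoking the definition of the correlation measures, followed by $\rho_k = \qdet[K(x_i,x_j)]_{i,j=1}^k$, yields
\[
\E_{\P_K}\Psi_g = \sum_{k=0}^\infty \frac{1}{k!}\int_{B^k}\prod_{i=1}^k(g(x_i)-1)\,\qdet[K(x_i,x_j)]_{i,j=1}^k\,d\mu^{\otimes k}.
\]

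The key algebraic step is the scaling identity
\[
\prod_{i=1}^k(g(x_i)-1)\,\qdet[K(x_i,x_j)]_{i,j=1}^k = \qdet\bigl[\sqrt{g(x_i)-1}\,K(x_i,x_j)\,\sqrt{g(x_j)-1}\bigr]_{i,j=1}^k,
\]
for any measurable choice of square root (the individual square roots may be complex, but the ambiguity cancels since only the product $d_i^2 = g(x_i)-1$ enters). This follows from the identification $\qdet = \Pf$ recorded in Appendix~\ref{app:qdet}, together with the classical Pfaffian identity $\Pf(S A S^{t}) = \det(S)\Pf(A)$ applied to $S = D\otimes I_2$ with $D = \mathrm{diag}(\sqrt{g(x_i)-1})$, noting $\det(D\otimes I_2)=\prod_i(g(x_i)-1)$. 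Substituting back and comparing with the series expansion of the quaternion Fredholm determinant of the trace-class operator $\sqrt{g-1}K\sqrt{g-1}$ on $L^2(E,\mu;\H_\C)$ recalled in Appendix~\ref{app:Fredholmqdet} delivers \eqref{def-pf-pp}. Absolute convergence and the Fubini interchange needed in these steps are justified by compactness of the support of $g-1$ combined with the local trace-class assumption on $K$.

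For the characterisation, suppose a point process $\P$ on $E$ satisfies $\E_\P\Psi_g = \qdet(1+\sqrt{g-1}K\sqrt{g-1})$ for every bounded measurable $g$ with $g-1$ compactly supported. Specialising to $g = 1-\sum_{j=1}^n t_j\chi_{A_j}$ with $t_j\in[0,1]$ and disjoint bounded Borel sets $A_j$ recovers the joint probability generating function $\E_\P\bigl[\prod_j(1-t_j)^{\#(X\cap A_j)}\bigr]$, which therefore agrees with that of $\P_K$. Since such generating functions determine all finite-dimensional distributions of the counting process $(\#(X\cap A))_A$ on bounded Borel sets, and these in turn determine the Borel probability measure on $\Conf(E)$, one concludes $\P=\P_K$.

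The main obstacle is the scaling identity for the quaternion determinant: it requires a careful bookkeeping passage through the Dyson--Moore definition and the embedding into $2k\times 2k$ antisymmetric complex matrices, but once that is set up the remainder of the argument is a routine term-by-term identification of two absolutely convergent series, plus the classical uniqueness of a point process from its finite-dimensional count distributions.
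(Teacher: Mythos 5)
Your argument is correct, and it is essentially the standard proof of this identity; the paper itself gives no proof, deferring to Rains, and Rains's argument is the same expansion of the multiplicative functional into a sum over distinct $k$-tuples followed by termwise identification with the defining series \eqref{def-fred-qdet} of the quaternion Fredholm determinant. Two small remarks. First, your ``key algebraic step'' is exactly item (ii) of Lemma~\ref{lem:e1} iterated over the rows/columns (with $q=\sqrt{g(x_i)-1}$ a scalar, so $q\bar q=g(x_i)-1$); the paper already invokes this scaling identity in the proof of Lemma~\ref{lem-holo}, so you could cite it rather than re-derive it through $\Pf(SAS^{t})=\det(S)\Pf(A)$ --- though your derivation via \eqref{qdet-pf} and \eqref{pf-id} is also fine, and you correctly note that only $g(x_i)-1$, not the choice of branch, enters. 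Second, the one genuinely soft spot is the interchange of $\E_{\P_K}$ with the infinite sum: this needs absolute convergence, i.e.\ $\sum_k\frac{\|g-1\|_\infty^k}{k!}\int_{B^k}\rho_k\,d\mu^{\otimes k}=\E\bigl[(1+\|g-1\|_\infty)^{\#(X\cap B)}\bigr]<\infty$, which is an exponential-moment bound on the counting function that does not follow from compact support alone; it is, however, exactly the convergence already required for the right-hand side of \eqref{def-pf-pp} to be defined by \eqref{def-fred-qdet}, so it is part of the standing trace-class hypotheses rather than a gap specific to your argument. The characterisation step via probability generating functions of the joint counts on disjoint bounded Borel sets is the standard uniqueness argument and is correct as stated.
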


\begin{proof}[Proof of Lemma \ref{lem-K-g}] 
By  \eqref{pf-det} and  \eqref{def-pf-pp} in Lemma \ref{lem-mult-fl},   the assumption $\E_{\P_K} \Psi_g > 0$ implies 
\[
\det\left( 1 + \varphi \left( \sqrt{g-1}K\sqrt{g-1} \right)  \right) > 0.
\]
Therefore, by the classical theory of Fredholm determinants, the operator 
$$
1 + \varphi \left( \sqrt{g-1}K\sqrt{g-1} \right)  =  \varphi \left( 1 +  \sqrt{g-1}K\sqrt{g-1} \right) 
$$
is invertible.  This implies that $1 + \sqrt{g-1}K \sqrt{g-1}$ is invertible and hence so is $1 + (g-1)K$. 
\end{proof}

\begin{proof}[Proof of Theorem \ref{thm-mul-f}]
Take any measurable bounded function $h(x)$ on $E$ such that $g-1$ is compactly supported. By \eqref{def-pf-pp}, we have 
\[
 \int_{\Conf(E)}\Psi_h \Psi_g d\P_K =   \E_{\P_K} (\Psi_{gh}) =  \qdet \Big( 1 + \sqrt{gh-1} K \sqrt{gh-1}\Big). 
\]
Note that for any complex-valued functions $g_1, g_2$ and quaternion kernel $T$ on $E$,
\begin{align}\label{bi-module-st}
\varphi( g_1 T g_2) = g_1 \varphi(T) g_2
\end{align}
 Therefore,  by \eqref{pf-det}, we have 
\begin{align*}
\left( \qdet ( 1 + \sqrt{gh-1} K \sqrt{gh-1})\right)^2  &= \det\left( 1 + \varphi(\sqrt{gh-1} K \sqrt{gh-1})\right)  
\\
&= \det ( 1 +\sqrt{gh-1} \varphi(K) \sqrt{gh-1}) \\
&   =  \det( 1 + \sqrt{gh-1} \varphi(K) \sqrt{gh-1}). 
\end{align*}
Using the regularization of the Fredholm determinant in \cite[Section 2.4]{Bufetov13}, we have 
\begin{align}\label{fred-AB}
\det( 1 + \sqrt{gh-1}  \varphi(K) \sqrt{gh-1}) = \det( 1 + (gh-1)  \varphi(K)). 
\end{align}
Observe that we have 
\begin{align*}
1 + (gh -1) K &=       \Big( 1+ (gh-1)K\Big)\Big(1 + (g-1)K\Big)^{-1}  \Big( 1    + (g-1)K\Big)
\\
&= \Big( 1 + (g - 1)K + (h-1)gK \Big)\Big(1 + (g-1)K\Big)^{-1}  \Big( 1    + (g-1)K\Big)
\\
&= \Big( 1 + (h-1)  gK (1 + (g-1)K)^{-1} \Big) \Big( 1 + (g-1)K\Big).  
\end{align*}
Therefore, again by using \eqref{bi-module-st},  we have 
\[
1 + (gh -1) \varphi( K )=  \Big[ 1 + (h-1)  \varphi\Big( g K (1 + (g-1)K)^{-1} \Big) \Big] \Big[ 1 + (g-1)\varphi(K)\Big]. 
\]
 Applying the multiplicativity of the  ordinary Fredholm determinant, we have 
\begin{multline*}
\left( \qdet ( 1 + \sqrt{gh-1} K \sqrt{gh-1})\right)^2 = \det ( 1 + (gh-1) \varphi( K))  =
\\
= \det\left[ 1 + (h-1)   \varphi\left ( g  K ( 1 + (g-1)  K )^{-1} \right)\right]  \det ( 1 + (g-1) \varphi(K)).
\end{multline*}
Then by using \eqref{bi-module-st}, similar identity as \eqref{fred-AB} and the definition \eqref{def-K-g} of $K^g$, we obtain 
\begin{multline}\label{AB-sq-C-sq}
\left( \qdet ( 1 + \sqrt{gh-1} K \sqrt{gh-1})\right)^2 =
\\
=   \det\left[(1  +     \varphi\left ( \sqrt{h-1} K^g \sqrt{h-1}\right) \right)  \det \left(1 +  \varphi( \sqrt{g-1} K \sqrt{g-1})\right)
\\
=  \left( \qdet ( 1 + \sqrt{h-1} K^g \sqrt{h-1})\right)^2   \left(\qdet ( 1 +   \sqrt{g-1} K \sqrt{g-1})\right)^2,
\end{multline}
where we used~\eqref{pf-det}.
%Since $K$ is quaternion self-adjoint, so are the quaternion kernels $\sqrt{h-1} K^g \sqrt{h-1}$ and  $ \sqrt{g-1} K \sqrt{g-1}$. 
%Hence  by \eqref{pf-det}, we obtain 
%\begin{multline}\label{AB-sq-C-sq}
%\left( \qdet ( 1 + \sqrt{gh-1} K \sqrt{gh-1})\right)^2  =\\
%=  \left( \qdet ( 1 + \sqrt{h-1} K^g \sqrt{h-1})\right)^2   \left(\qdet ( 1 +   \sqrt{g-1} K \sqrt{g-1})\right)^2. 
%\end{multline}
Now for any $z\in \C$, set
\[
 h_z(x) = 1 + z ( h(x) -1).
\]
By  Lemma  \ref{lem-holo}, the following two functions are holomorphic on $\C$: 
\[
f_1(z)= \qdet ( 1 + \sqrt{gh_z-1} K \sqrt{gh_z-1}), \quad f_2(z) =  \qdet ( 1 + \sqrt{h_z-1} K^g \sqrt{h_z-1}). 
\]
Substituting $h_z$ into the equality \eqref{AB-sq-C-sq}, we obtain
\[
f_1(z)^2 = f_2(z)^2  \left(\qdet ( 1 +   \sqrt{g-1} K \sqrt{g-1})\right)^2, \quad z \in \C. 
\] 
Since $f_1, f_2$ are entire functions, there exists a constant $C\in \{1, -1\}$, such that 
\begin{align}\label{id-pm}
f_1(z) =  C f_2(z)  \qdet ( 1 +   \sqrt{g-1} K \sqrt{g-1}).
\end{align}
Note that $h_0(x) = 1$ and $h_1(x)=h(x)$. Hence 
\[
f_1(0)  = \qdet ( 1 + \sqrt{g-1} K \sqrt{g-1})  = \E_{\P_K} \Psi_g > 0, \quad f_2(0) = 1. 
\]
This implies that the constant $C$ in the equality \eqref{id-pm} must be equal to $1$, and substituting $z = 1$ into \eqref{id-pm}, we obtain 
\begin{multline}\label{pf-prod-pf}
\qdet ( 1 + \sqrt{gh-1} K \sqrt{gh-1})=
\\
=   \qdet ( 1 + \sqrt{h-1} K^g \sqrt{h-1})  \qdet ( 1 +   \sqrt{g-1} K \sqrt{g-1}).
\end{multline}
The equality \eqref{pf-prod-pf} now can be recast as 
\begin{multline}\label{frac-int}
\frac{\displaystyle \int_{\Conf(E)}\Psi_h \Psi_g d\P_K}{\displaystyle \int_{\Conf(E)} \Psi_g d\P_K} = \frac{\qdet ( 1 + \sqrt{gh-1} K \sqrt{gh-1})}{\qdet ( 1 + \sqrt{g-1} K \sqrt{g-1})}  = 
\\
= \qdet ( 1 + \sqrt{h-1} K^g \sqrt{h-1}). 
\end{multline}
This, combined with the characterization, Lemma \ref{lem-mult-fl},  of Pfaffian point processes, implies the desired conclusion \eqref{norm-mul-pp}. 
\end{proof}

We now proceed to the proof of  Theorem \ref{thm-cond-measure}.   First let us recall some general results for point processes. 
For a Borel subset $W\subset E$,  define 
\[
\Conf(E; W) = \left\{X\in\Conf(E)\,|\,  X\subset W\right\}.
\]
We have the following natural identification  
\begin{align}\label{id-conf-E-W}
\Conf(E; W)  \xrightarrow[\simeq]{\quad \iota_W \quad } \Conf(W). 
\end{align}

%\begin{dfn} 
For a Borel probability measure $\P$ on $\Conf(E)$ and a Borel subset $W$, if $\P(\Conf(E; W)) > 0$, then we define  the measure $\overline{\P|_{\Conf(W)}}$ to be the probability measure on $\Conf(W)$, which corresponds, using the identification \eqref{id-conf-E-W}, to the  normalized restriction of $\P$ on the subset $\Conf(E; W) \subset \Conf(E)$. That is, 
 \[
\overline{\P|_{\Conf(W)}} = ({\iota_W})_{*} \left( \overline{\P|_{\Conf(E; W)}} \right)=
(\iota_W)_{*} \left( \dfrac{\P |_{\Conf(E; W)}}{\P(\Conf(E; W))}\right). 
\]
%\end{dfn}

\begin{rmk}
We warn the reader the measure $\overline{\P|_{\Conf(W)}}$ is different from the pushforward measure $(\pi_W)_{*}(\P)$ of $\P$ under the `forgetting' map  $\pi_W$. 
%: \Conf(E) \rightarrow   \Conf(W)$ defined by $\pi_W(X) = X\cap W$.
\end{rmk}

We will use the following general results on point processes.
\begin{prop}[{\cite[Lemma 6.2]{Bufetov19}}]\label{lem-good-rel}
Let $\P$ be a point process on $E$.  Then for any bounded Borel subset $B\subset E$, we have
\begin{align}\label{non-vanish-con}
\P\left(\#_B = \#(X \cap B)\Big| X,  E\setminus B\right) > 0 \quad \text{for $\P$-almost every $X \in \Conf(E)$}.
\end{align}
\end{prop}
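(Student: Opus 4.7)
The plan is to disintegrate $\P$ along the map $\pi_{E\setminus B}\colon\Conf(E)\to\Conf(E\setminus B)$ and exploit the trivial observation that every Borel probability measure on a countable set is purely atomic.

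Since $B$ is bounded and $\P$-a.e.\ configuration is locally finite, the counting random variable $\#_B\colon X\mapsto\#(X\cap B)$ is $\P$-almost surely finite and therefore takes values in $\N_0 := \{0,1,2,\ldots\}$. For $(\pi_{E\setminus B})_*(\P)$-almost every $Y\in\Conf(E\setminus B)$, set $\P_Y := \P(\,\cdot\,|\,Y, E\setminus B)$ and let $\mu_Y$ denote the push-forward of $\P_Y$ under $\#_B$, a Borel probability measure on the countable set $\N_0$. Writing $S(\mu_Y) := \{n\in\N_0 : \mu_Y(\{n\})=0\}$, countable additivity gives
\[
\mu_Y(S(\mu_Y)) \;=\; \sum_{n\in S(\mu_Y)}\mu_Y(\{n\}) \;=\; 0
\]
for every such $Y$, i.e.\ $\mu_Y$ assigns zero mass to the set of its non-atoms.

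To conclude, define
\[
A := \bigl\{X\in\Conf(E) : \P(\#_B = \#(X\cap B)\,|\,X, E\setminus B) = 0\bigr\}.
\]
For $X$ with $X\cap(E\setminus B)=Y$, the condition $X\in A$ is equivalent to $\mu_Y(\{\#(X\cap B)\})=0$, i.e.\ $\#_B(X)\in S(\mu_Y)$. Hence
\[
\P(A) \;=\; \int_{\Conf(E\setminus B)} \P_Y\bigl(\#_B\in S(\mu_Y)\bigr)\,d(\pi_{E\setminus B})_*(\P)(Y) \;=\; \int \mu_Y(S(\mu_Y))\,d(\pi_{E\setminus B})_*(\P)(Y) \;=\; 0,
\]
which is exactly the desired positivity statement $\P$-almost surely.

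The main (in fact essentially only) obstacle is measurability: one needs the integrand $Y\mapsto\mu_Y(S(\mu_Y))$ to be Borel for the displayed integral to be meaningful. This follows from the existence of a regular version of the conditional probability on the Polish space $\Conf(E)$, which guarantees that $(Y,n)\mapsto\mu_Y(\{n\})$ is jointly Borel measurable; consequently $S(\mu_Y)$ depends measurably on $Y$ and the above integrand is a Borel function. Everything else in the argument is purely formal once this is granted.
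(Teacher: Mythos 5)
Your argument is correct: the reduction to the fact that a probability measure on the countable set $\N_0$ gives zero mass to its non-atoms, combined with the disintegration of $\P$ over $\pi_{E\setminus B}$, is exactly the standard proof of this statement, and your handling of the measurability of $Y\mapsto\mu_Y(\{n\})$ via the regular conditional probability is adequate. Note that the paper itself gives no proof here --- it imports the result from \cite[Lemma 6.2]{Bufetov19} --- and the argument in that reference is essentially the one you give, phrased with conditional expectations ($\E[\mathbbm{1}_{\{\#_B=n\}}\mathbbm{1}_{\{\P(\#_B=n\mid\cdot)=0\}}]=\E[\P(\#_B=n\mid\cdot)\mathbbm{1}_{\{\P(\#_B=n\mid\cdot)=0\}}]=0$, summed over $n$) rather than with an explicit disintegration.
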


\begin{prop}[{\cite[Proposition 8.1]{Bufetov19}}]\label{prop-cond-m}
Let $B\subset E$ be a bounded Borel subset. If $\P$ is a simple point process on $E$ admitting correlation functions of all orders, then for $\P$-almost every $X \in \Conf(E)$, we have $\P^{X\cap B} ( X\cap B = \emptyset) > 0$ and 
\begin{align}\label{eq-cond-m}
\P(\cdot|X,B) =\overline{\P^{X\cap B}|_{\Conf(E\setminus B)}}.
\end{align}
\end{prop}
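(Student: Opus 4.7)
The plan is to chain together three results proved earlier in the paper: Proposition \ref{prop-cond-m} (conditional measure equals restricted Palm measure), Corollary \ref{cor-m-order-palm} (Palm of a Pfaffian process is Pfaffian with explicit kernel), and Theorem \ref{thm-mul-f} (the normalised multiplicative-functional transform of a Pfaffian process is Pfaffian). The key observation is that restricting a point process to configurations supported in $E\setminus B$ and renormalising is precisely the operation of multiplication by the multiplicative functional $\Psi_g$ for $g=\chi_{E\setminus B}$, since $\Psi_g(Y)=\mathbf{1}[Y\subset E\setminus B]$.

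Concretely, by Proposition \ref{prop-cond-m}, for $\P_K$-almost every $X\in\Conf(E)$,
\[
\P_K(\cdot\,|\,X,B)=\overline{\P_K^{X\cap B}\,|_{\Conf(E\setminus B)}}=\frac{\Psi_g\,\P_K^{X\cap B}}{\E_{\P_K^{X\cap B}}\Psi_g},\qquad g:=\chi_{E\setminus B},
\]
and by Corollary \ref{cor-m-order-palm} the Palm measure $\P_K^{X\cap B}=\P_{K^{X\cap B}}$ is itself a Pfaffian point process with self-adjoint quaternion kernel $K^{X\cap B}$. It therefore suffices to apply Theorem \ref{thm-mul-f} with base kernel $K^{X\cap B}$ and multiplier $g$. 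The hypotheses read: $g-1=-\chi_B$ is compactly supported, which holds since $B$ is bounded and bounded sets in $E$ are relatively compact; and $\E_{\P_K^{X\cap B}}\Psi_g=\P_K^{X\cap B}(Y\cap B=\emptyset)>0$, which is exactly the positivity assertion of Proposition \ref{prop-cond-m}. Lemma \ref{lem-K-g} applied to $K^{X\cap B}$ then delivers the invertibility of $1+(g-1)K^{X\cap B}=1-\chi_BK^{X\cap B}$, which is the first conclusion of the theorem.

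Theorem \ref{thm-mul-f} next produces that the normalised measure is Pfaffian with correlation kernel
\[
(K^{X\cap B})^{g}=\sqrt{g}\,K^{X\cap B}\bigl(1+(g-1)K^{X\cap B}\bigr)^{-1}\!\sqrt{g}=\chi_{E\setminus B}\,K^{X\cap B}\bigl(1-\chi_BK^{X\cap B}\bigr)^{-1}\!\chi_{E\setminus B},
\]
using $\sqrt{\chi_{E\setminus B}}=\chi_{E\setminus B}$; this is by definition $K^{[X,B]}$. Because of the flanking $\chi_{E\setminus B}$ factors the resulting measure is automatically supported on $\Conf(E\setminus B)$, so it is naturally a Pfaffian point process on $E\setminus B$, establishing $\P_K(\cdot|X,B)=\P_{K^{[X,B]}}$.

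The one point I expect to require genuine work, rather than just citation, is the functional-analytic verification that $K^{X\cap B}$ itself defines a bounded, locally trace-class self-adjoint operator on $L^2(E,\mu;\H_\C)$ for $\P_K$-almost every $X$, so that Theorem \ref{thm-mul-f} and Lemma \ref{lem-K-g} are genuinely applicable to it. Self-adjointness is recorded in Corollary \ref{cor-m-order-palm}. For the trace-class/Fredholm-determinant part one writes $K^{X\cap B}$ as a finite-rank quaternionic correction of $K$ built from the functions $K(\cdot,x_i)$, $x_i\in X\cap B$, and checks that the almost-sure integrability of these evaluations suffices to inherit local trace-class from $K$. This is the routine but necessary bookkeeping; once it is in place, the theorem follows from the chain of applications above.
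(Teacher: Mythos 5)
There is a genuine gap: your argument is circular with respect to the statement you were asked to prove. The statement here is Proposition~\ref{prop-cond-m} itself --- the identity $\P(\cdot|X,B)=\overline{\P^{X\cap B}|_{\Conf(E\setminus B)}}$ together with the positivity $\P^{X\cap B}(X\cap B=\emptyset)>0$, for a \emph{general} simple point process admitting all correlation functions. Your very first display invokes ``Proposition~\ref{prop-cond-m}'' to assert exactly this identity, and your verification of the hypothesis $\E_{\P_K^{X\cap B}}\Psi_{\chi_{E\setminus B}}>0$ again appeals to ``the positivity assertion of Proposition~\ref{prop-cond-m}.'' What you have actually written is a proof of the downstream Theorem~\ref{thm-cond-measure} (the explicit Pfaffian kernel $K^{[X,B]}$ for the conditional measure), taking Proposition~\ref{prop-cond-m} as given --- and indeed your chain of reductions (Proposition~\ref{prop-cond-m}, then Corollary~\ref{cor-m-order-palm}, then Theorem~\ref{thm-mul-f} with $g=\chi_{E\setminus B}$, plus the invertibility via Lemma~\ref{lem-K-g}) reproduces the paper's proof of that theorem almost verbatim. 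But none of this establishes the proposition itself.

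Note also that the paper does not prove Proposition~\ref{prop-cond-m}; it imports it from \cite[Proposition 8.1]{Bufetov19}. An actual proof has nothing to do with Pfaffian structure: it is a measure-theoretic statement relating two disintegrations of $\P$. One must show that conditioning on the event that the configuration in $B$ equals a prescribed finite set $\{x_1,\dots,x_m\}$ is, for $\lambda_m$-almost every such tuple, the same as first passing to the reduced Palm measure $\P^{x_1,\dots,x_m}$ (defined by the sum-over-distinct-tuples disintegration formula) and then restricting to configurations with no further points in $B$ and renormalising. This requires a differentiation argument over shrinking neighbourhoods of the points of $X\cap B$ and uses Proposition~\ref{lem-good-rel} to guarantee the relevant conditional probabilities are positive; the positivity $\P^{X\cap B}(X\cap B=\emptyset)>0$ is part of what must be proved, not something one may assume. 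If your intent was to prove Theorem~\ref{thm-cond-measure}, your sketch is essentially correct and matches the paper (including the point, which you rightly flag, that one must check $K^{X\cap B}$ is a locally trace-class perturbation of $K$ so that Lemma~\ref{lem-K-g} and Theorem~\ref{thm-mul-f} apply); but as a proof of Proposition~\ref{prop-cond-m} it begs the question.
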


\begin{proof}[Proof of Theorem \ref{thm-cond-measure}]
By Corollary \ref{cor-m-order-palm} and  Proposition \ref{prop-cond-m},  for $\P$-a.e. $X\in\Conf(E)$, 
\begin{align}\label{qdet-pos}
\begin{split}
 &\P_K^{X\cap B} ( X\cap B = \emptyset)  = \P_{K^{X\cap B}} ( X\cap B = \emptyset) \\
&=\E_{\P_{K^{X\cap B}}} \left(  \Psi_{\chi_{E\setminus B}}\right)  = \qdet \left( 1  - \chi_B K^{X\cap  B} \chi_B\right) >0.
\end{split}
\end{align}
Similar to the proof of the invertibility statement in Lemma \ref{lem-K-g}, the non-vanishing statement \eqref{qdet-pos} implies that $1 - \chi_B K^{X\cap B} \chi_B$ is invertible and thus so is the operator $1 - \chi_B K^{X\cap B}$. 

Now applying  \eqref{eq-cond-m} to $\P_K$, and then applying \eqref{eq-m-order-palm}-\eqref{norm-mul-pp},  we obtain 
\begin{multline*}
\P_K(\cdot|X,B) =\overline{\P_K^{X\cap B}|_{\Conf(E\setminus B)}} =\overline{\P_{K^{X\cap B}}|_{\Conf(E\setminus B)}} = 
\\
= \frac{ \Psi_{\chi_{E\setminus B} } \cdot  \P_{K^{X\cap B}}}{\E_{\P_{K^{X\cap B}}} \left(\Psi_{\chi_{E\setminus B} }\right)}  =\P_{\chi_{E\setminus B} K^{X\cap B}  ( 1 - \chi_B K^{X\cap B})^{-1} \chi_{E\setminus B}} = \P_{K^{[X, \, B]}}.
\end{multline*}
This completes the proof of the theorem. 
\end{proof}

{\bf {Acknowledgements}}
AB’s research is supported by the European Research Council (ERC) under the European Union Horizon 2020 research and innovation programme, grant 647133 (ICHAOS), by the Agence Nationale de Recherche, project ANR-18-CE40-0035, and by the Russian Foundation for Basic Research, grant 18-31-20031. The research of FDC is supported by  the European Research Council (Grant Number 669306) and partially supported by Gruppo Nazionale di Fisica Matematica  GNFM-INdAM.  The research of YQ is supported by grants NSFC Y7116335K1,  NSFC 11801547 and NSFC 11688101 of National Natural Science Foundation of China. FDC would also like to thank the organisers of the research school `Gaz de Coulomb, int\'egrabilit\'e et \'equations de Painlev\'e 2019' at CIRM - Marseille, where this work was started.

\appendix
 \section{Quaternion determinants}
 \label{app:qdet}
 In the investigation of eigenvalues of random
matrices, it was found by Dyson~\cite{Dyson70} that the correlation functions can be
conveniently expressed as determinants of quaternion matrices, constructed according
to the rules laid down by Moore~\cite{Moore22}.

In view of the non-commutativity of multiplication of quaternions a  canonical definition of the determinant of an unconditioned quaternion matrix is an open problem (see~~\cite{Aslaksen96,Kargin14} for a survey of the most known attempts). However, as argued by Moore~\cite{Moore22}, for a {\it self-adjoint} quaternion matrix $M$ a useful determinant, $\qdet M$,
is definable by suitable prescription of the order
of the $n$ factors of the $n!$ terms of the formal ordinary determinant. It turns out that Moore's definition of determinants applies not only to self-adjoint
matrices but also to matrices which are self-adjoint except for a single row or column.

\begin{rmk}\label{rem-sa}
The following remarks will clarify the quaternion self-adjointness:
\begin{itemize}
\item[(i)] If $M$ is a self-adjoint quaternion matrix, then for any scalar $z \in \C$, the matrix $z M$ is again  a self-adjoint quaternion matrix.
\item[(ii)] Any  matrix over $\C$, when viewed as a quaternion matrix over $\H_\C$, is quaternion self-adjoint  iff it is {\it symmetric} in the usual sense.  Therefore, a Hermitian matrix over $\C$ is not necessarily quaternion self-adjoint. 
\end{itemize}
\end{rmk}

 Following Dyson \cite{Dyson72}, we define the quaternion determinant $\qdet M$ of an almost self-adjoint $n\times n$ quaternion matrix $M$ by recursion on $n$ as follows. 
\begin{dfn}
\label{dfn:det}
Let $M$ be an $n\times n$ almost self-adjoint quaternion matrix. For $n=1$ we set
\be
\qdet M=M_{11},
\ee
and for $n>1$ we set
\be
\qdet M=\sum_{l=1}^k\epsilon_{kl}M_{kl}\qdet M(k,l),
\label{eq:rec_qdet}
\ee
where $k$ is the integer singled out by the condition~\eqref{eq:a-s-a}, 
\be
\epsilon_{kl}=-1,\quad \text{if $k\neq l$};\quad \epsilon_{kk}=+1,
\ee
and $M(k,l)$ is the $(n-1)\times(n-1)$ almost self-adjoint matrix obtained from $M$ by first replacing the $l$-th column by the $k$-th column, and then deleting both the $k$-th column and the $k$-th row. 
\end{dfn}
The above algorithm agrees with the definition of the ordinary determinant for commuting variables. The next theorem asserts that for self-adjoint matrices the recursion \eqref{eq:rec_qdet} is independent of $k$ and thus the quaternion determinant is uniquely defined. Moreover, the quaternion determinant enjoys many of the useful properties of the ordinary determinant.
\begin{thm}[Dyson \cite{Dyson70,Dyson72}] 
\label{thm:Dyson_prop}
Let $M$ be an almost self-adjoint quaternion matrix. 
\begin{enumerate} 
\item[(i)] If $M$ is self-adjoint, then the value of $\qdet M$ defined by \eqref{eq:a-s-a} is independent of $k$ and   $\qdet M=\overline{\qdet M} \in\C$.
\item[(ii)] If $M$ has two identical columns (or rows) then $\qdet M=0$.
 \item[(iii)] Let $M^{(1)}$, $M^{(2)}$ be almost self-adjoint matrices, satisfying the conditions~\eqref{eq:a-s-a}, and
 \begin{align}
M^{(1)}_{ij}=M^{(2)}_{ij}=M_{ij},\quad i\neq k\\
M^{(1)}+M^{(2)}_{ij}=M_{ij},\quad i= k
 \end{align} 
 for some common row-index $k$. Then $\qdet M^{(1)}+\qdet M^{(2)}=\qdet M$.
 \end{enumerate}
\end{thm}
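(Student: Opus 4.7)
The plan is to prove (i), (ii), and (iii) simultaneously by induction on $n$, the size of $M$. The base case $n = 1$ is trivial: $\qdet M = M_{11}$ is a complex scalar, there are no two identical rows/columns to compare, and the linearity statement reduces to the tautology $M^{(1)}_{11} + M^{(2)}_{11} = M_{11}$.

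For the inductive step, property (i) is the main obstacle, and the approach is to identify the recursive definition \eqref{eq:rec_qdet} with Moore's closed-form cycle expansion
\[
\qdet M = \sum_{\sigma \in S_n} (-1)^{n - c(\sigma)} \prod_{\gamma \in \mathcal{C}(\sigma)} P_\gamma(M),
\]
where $\mathcal{C}(\sigma)$ is the cycle decomposition of $\sigma$, $c(\sigma) = |\mathcal{C}(\sigma)|$, and for a cycle $\gamma = (a_1, a_2, \ldots, a_m)$ with $a_1$ the smallest index one sets $P_\gamma(M) = M_{a_1 a_2} M_{a_2 a_3} \cdots M_{a_m a_1}$. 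When $M$ is self-adjoint, the scalar-product property $pq + \overline{q}\,\overline{p} = qp + \overline{p}\,\overline{q}$ together with $M_{ij} = \overline{M_{ji}}$ gives $P_\gamma(M) + \overline{P_\gamma(M)} \in \C$ and shows this quantity is invariant under cyclic rotation and reversal of the cycle, so the closed form is insensitive to the choices made for each cycle. Unfolding the recursion \eqref{eq:rec_qdet} along row $k$ and applying the inductive hypothesis to the $(n-1) \times (n-1)$ minors $M(k,l)$ reproduces this cycle sum: the factor $M_{kl}$ inserts the vertex $k$ into the cycle through $l$ of the smaller permutation, while the sign $\epsilon_{kl}$ accounts for the change in cycle count. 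Since the resulting formula makes no reference to $k$, independence of $k$ follows. The scalar nature $\qdet M = \overline{\qdet M} \in \C$ is an immediate corollary, since reversing every cycle maps the sum to its conjugate while preserving the collection of permutations.

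Property (iii) comes next and is the easiest. Since $M^{(1)}$ and $M^{(2)}$ are almost self-adjoint with the common distinguished index $k$, expanding each by the recursion \eqref{eq:rec_qdet} along the $k$-th row gives sums whose cofactors $\qdet M^{(s)}(k,l)$ depend only on rows and columns other than $k$; these coincide across $M$, $M^{(1)}$, $M^{(2)}$. The recursion is therefore a $\C$-linear combination of the entries $M_{kl}$ sitting as left factors, and the hypothesis $M^{(1)}_{kj} + M^{(2)}_{kj} = M_{kj}$ propagates directly to $\qdet M^{(1)} + \qdet M^{(2)} = \qdet M$.

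For (ii), suppose first $M$ is self-adjoint with two identical rows $p$ and $q$. By (i), $\qdet M$ may be computed by expanding along any row; choosing one other than $p$ or $q$, each cofactor $\qdet M(k,l)$ with $l \notin \{p,q\}$ still contains two equal rows and vanishes by induction, while the remaining terms pair up under the involution $\sigma \mapsto (pq)\,\sigma\,(pq)$ on $S_n$ in the cycle expansion and cancel, yielding $\qdet M = 0$. In the almost self-adjoint case, if the two identical rows are both distinct from the special index $k$, the same argument applies verbatim; if one of the identical rows is row $k$ itself, then the equality $M_{kj} = M_{pj}$ together with almost self-adjointness forces $M$ to be self-adjoint, reducing to the previous case. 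The argument for identical columns is dual. The hard part will be the combinatorial bookkeeping of the first step: checking that prepending the vertex $k$ to each permutation of $\{1, \ldots, \widehat{k}, \ldots, n\}$ enumerates all permutations of $\{1, \ldots, n\}$ with correct signs, and that the non-commutativity of $\H_\C$ is absorbed entirely by the scalar-product identity so each cycle contributes a genuine complex number.
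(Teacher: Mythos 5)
The paper does not prove this theorem: it is quoted from Dyson \cite{Dyson70,Dyson72} and used as a black box (the paper only proves the derived Lemmas \ref{lem:e1} and \ref{lem:e2} and states the equivalence with Moore's formula \eqref{eq:def_qdet} without proof). So there is no in-paper argument to compare against, and I can only assess your sketch on its own terms. Your overall strategy --- identify the recursion \eqref{eq:rec_qdet} with the cycle expansion and exploit the scalar-product property --- is indeed Dyson's route, and your treatment of (iii) is correct: every entry of the cofactor matrices $M(k,l)$ has first index different from $k$ (including the transplanted column, whose entries are $M_{ik}$ with $i\neq k$), so the cofactors coincide for $M^{(1)}$, $M^{(2)}$, $M$, and the expansion is left-linear in row $k$.

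Two steps, however, would fail as written. First, in (ii), the claim that if one of the identical rows is the exceptional row $k$ then ``almost self-adjointness forces $M$ to be self-adjoint'' is false: take $n=2$, $k=1$, $M=\left[\begin{smallmatrix} a & b\\ a & b\end{smallmatrix}\right]$ with $b\in\C$ and $a$ a non-scalar quaternion. This is almost self-adjoint with identical rows but not self-adjoint; one still gets $\qdet M=ab-ba=0$, but only because $b$ is a scalar, not by reduction to the self-adjoint case, so this branch of your case analysis needs a genuine argument. Second, in (i), ``reversing every cycle maps the sum to its conjugate'' is not immediate: $\overline{M_\sigma}$ is the product of the conjugated cycle factors \emph{in reversed order}, and the cycle factors are non-commuting quaternions, so $\overline{M_\sigma}\neq M_{\sigma^{-1}}$ termwise. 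The standard repair is to first sum, within a fixed cycle partition, over the two orientations of each cycle of length $\geq 3$; by distributivity each such cycle's contribution collapses to the scalar $P_\gamma+\overline{P_\gamma}$ (cycles of length $\leq 2$ already contribute scalars), after which commutativity, base-point independence (from $pq+\overline{pq}=qp+\overline{qp}$), and reality of the total all follow. This orientation-pairing lemma is precisely the ``combinatorial bookkeeping'' you defer, and it is also what justifies unfolding the recursion along an \emph{arbitrary} row $k$: left-multiplication by $M_{kl}$ places that factor at the head of the whole product, which matches the cycle-ordered term only when $k=n$, so independence of $k$ is not obtained for free from the unfolding. Until that lemma is supplied, (i), the general-$k$ expansion invoked in (ii), and hence (ii) itself remain unproved.
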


\begin{rmk}
The multiplicative property $\det(AB)=\det(A)\det(B)$ of the ordinary determinant does not have an analogue in the quaternion determinant setting. The product of two almost self-adjoint matrices $A$ and $B$ is in general neither self-adjoint nor almost self-adjoint, and so $\qdet(AB)$ is generally undefined.
\end{rmk}

As suggested by Moore~\cite{Moore22}, a quaternion determinant of a self-adjoint matrix can be defined by the combinatorial formula that extends Cayley's definition of ordinary determinant with a careful prescription on the order of the $n$ factors of the $n!$ terms. Namely, let $S_n$ be the group of permutations of the set $\{1, \cdots, n\}$.   Write every permutation $\sigma \in S_n$ uniquely as a product of cycles (cycles of length one are also considered): 
\begin{align}\label{cyc-sigma}
\sigma = (n_1 i_2\dots i_s) (n_2 j_2\dots j_t)\dots (n_r k_2 \dots k_l),
\end{align}
where $n_i$ are largest elements of each cycle and $n_1 > n_2> \cdots> n_r$. For an $n\times n$ self-adjoint quaternion matrix $M$ and an element $\sigma \in S_n$ written as in \eqref{cyc-sigma},  we set 
\[
M_\sigma  =  (M_{n_1 i_2} M_{i_2 i_3} \cdots M_{i_s n_1})\cdots (M_{n_r k_2} M_{k_2 k_3} \cdots M_{k_l n_r}).
\]
Then, as proved by Dyson \cite{Dyson72}, the following original definition by Moore of quaternion determinant of self-adjoint matrices is equivalent to the recursion in Definition \ref{dfn:det}.

\begin{prop} If $M$ is an $n\times n$ self-adjoint quaternion matrix, then 
\be
\qdet M=\sum_{\sigma\in S_n}\epsilon(\sigma) M_\sigma,
\label{eq:def_qdet}
\ee
where $\epsilon (\sigma)$ is the sign  of the permutation $\sigma$. 
\end{prop}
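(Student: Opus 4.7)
The plan is to prove the identity $\qdet M = \sum_{\sigma \in S_n} \epsilon(\sigma) M_\sigma$ by induction on $n$, using the recursive Definition~\ref{dfn:det} on the left-hand side and expanding the combinatorial sum on the right-hand side according to the cycle that contains the largest index. The base case $n=1$ is immediate, since both sides reduce to $M_{11}$. For the inductive step, assume the formula holds for every self-adjoint quaternion matrix of size strictly less than $n$, and let $M$ be an $n \times n$ self-adjoint quaternion matrix. By Theorem~\ref{thm:Dyson_prop}(i) we may apply the recursion \eqref{eq:rec_qdet} with $k=n$, obtaining
\[
\qdet M = M_{nn} \qdet M(n,n) + \sum_{l=1}^{n-1} \epsilon_{nl}\, M_{nl} \qdet M(n,l).
\]
Since $M(n,n)$ is the self-adjoint principal minor of size $n-1$, the inductive hypothesis applies to $\qdet M(n,n)$ directly.

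Next I would partition the combinatorial sum $D_n(M) := \sum_{\sigma \in S_n} \epsilon(\sigma) M_\sigma$ according to the cycle of $\sigma$ containing $n$. Moore's convention forces this cycle to be written as $(n\, i_2\, i_3 \cdots i_s)$ with $s \geq 1$ (so $s=1$ corresponds to $n$ being a fixed point), contributing a sign $(-1)^{s-1}$ and a quaternion factor $M_{n i_2} M_{i_2 i_3} \cdots M_{i_s n}$. The remaining cycles of $\sigma$ form a permutation of $\{1,\dots,n-1\}\setminus\{i_2,\dots,i_s\}$, and since $M$ is self-adjoint the corresponding principal submatrix $\tilde M$ is self-adjoint, so by the inductive hypothesis the signed sum over such permutations equals $\qdet \tilde M$. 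Reindexing by $l := i_2$ yields
\[
D_n(M) = M_{nn} \qdet M(n,n) + \sum_{l=1}^{n-1} M_{nl}\, A_l,
\]
where $A_l$ gathers all contributions from cycles through $n$ whose first step lands on $l$. Matching the two expansions, the claim reduces to proving $A_l = \epsilon_{nl}\, \qdet M(n,l) = -\qdet M(n,l)$ for every $l < n$.

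This last identification is the main obstacle, because $M(n,l)$ is only almost self-adjoint (its distinguished index being $l$, by the definition of $M(n,l)$ and the self-adjointness of $M$), so the inductive hypothesis cannot be applied to it as-is. My plan is to attack it by a secondary induction in which I repeatedly apply the recursion \eqref{eq:rec_qdet} to $\qdet M(n,l)$, using the special index $k=l$ each time. A single such expansion gives
\[
\qdet M(n,l) = M_{ln}\, \qdet M(n,l)(l,l) + \sum_{\substack{m=1 \\ m \neq l}}^{n-1} \epsilon_{lm}\, M_{nm}\, \qdet M(n,l)(l,m),
\]
after substituting the entries $M(n,l)_{ll} = M_{ln}$ and $M(n,l)_{lm} = M_{nm}$ for $m \neq l$. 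The factor $M_{ln}$ terminates a partial cycle $n \to l \to \cdots \to l \to n$ and closes it up, matching the terms of $A_l$ of length $s=2$ together with the completed self-adjoint minor. The factors $M_{nm}$ extend the partial cycle by one step to $m$, and by a secondary induction on the length of the partial cycle the remaining almost self-adjoint minors $M(n,l)(l,m)$ reproduce all terms of $A_l$ of length $s \geq 3$. The sign bookkeeping collapses because at each application of the recursion the factor $\epsilon_{lm} = -1$ for $m \neq l$ and $+1$ for $m = l$ combines with the orientation of the growing cycle to give exactly the $(-1)^{s-1}$ weight dictated by $\epsilon(\sigma)$, together with an overall sign $-1$ accounting for $\epsilon_{nl}$. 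Substituting the resulting identity $A_l = -\qdet M(n,l)$ into the expansion of $D_n(M)$ and comparing with the recursive formula for $\qdet M$ closes the induction and yields \eqref{eq:def_qdet}.
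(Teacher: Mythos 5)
The paper itself gives no proof of this proposition --- it is stated with a citation to Dyson \cite{Dyson72} --- so there is nothing in-text to compare against; your double induction (outer induction on $n$ via the recursion \eqref{eq:rec_qdet} with $k=n$, partition of Moore's sum by the cycle containing $n$, and a secondary induction identifying $A_l$ with $-\qdet M(n,l)$) is in substance the standard (Dyson's) argument and is the right strategy. The outer layer is fine: since $n$ is the largest index, the cycle through $n$ is the leftmost factor of $M_\sigma$, so $M_{nl}$ can be pulled out on the left and the reduction to $A_l=-\qdet M(n,l)$ is correct despite non-commutativity.

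There are, however, two problems in the final step. First, a concrete error: only \emph{column} $l$ of $M(n,l)$ was replaced by column $n$, so row $l$ satisfies $M(n,l)_{ll}=M_{ln}$ but $M(n,l)_{lm}=M_{lm}$ for $m\neq l$, not $M_{nm}$ as you wrote. Taken literally this breaks the matching: the terms of $A_l$ continue the cycle $n\to l\to m\to\cdots$ and hence carry the factor $M_{lm}$; products of the form $M_{nl}M_{nm}\cdots$ never occur in Moore's sum. (Check $n=3$, $l=1$: $M(3,1)=\left[\begin{smallmatrix} M_{13} & M_{12}\\ M_{23} & M_{22}\end{smallmatrix}\right]$, and expanding along row $1$ uses $M_{13}$ and $M_{12}$, giving $\qdet M(3,1)=M_{13}M_{22}-M_{12}M_{23}=-A_1$.) Second, the secondary induction is asserted rather than set up. To run it you need an induction hypothesis covering the whole family of matrices that actually arises: for $S\subseteq\{1,\dots,n-1\}$ and $m\in S$, let $N_{S,m}$ be the principal submatrix $M|_S$ with column $m$ replaced by the entries $M_{in}$, $i\in S$. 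The key structural fact, which you should state, is that $N_{S,m}(m,m)=M|_{S\setminus\{m\}}$ and $N_{S,m}(m,m')=N_{S\setminus\{m\},m'}$ for $m'\neq m$, so the family is closed under the recursion; induction on $|S|$ then gives
\[
\qdet N_{S,m}=\sum_{s\ge 1}\ \sum_{\substack{j_2,\dots,j_s\\ \text{distinct in }S\setminus\{m\}}}(-1)^{s-1}\,M_{mj_2}M_{j_2j_3}\cdots M_{j_s n}\,\qdet M|_{S\setminus\{m,j_2,\dots,j_s\}},
\]
with the $s=1$ term read as $M_{mn}\qdet M|_{S\setminus\{m\}}$, each off-diagonal step contributing $\epsilon_{mm'}=-1$ and the closing diagonal step contributing $+1$. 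Taking $S=\{1,\dots,n-1\}$, $m=l$ gives $\qdet M(n,l)=-A_l$, and the remaining sign is exactly $\epsilon_{nl}$. With the entry corrected and this hypothesis in place, your argument closes.
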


\begin{ex} 
For a $3\times3$ quaternion self-adjoint matrix $M$, we have 
$
\qdet M=M_{33}M_{22}M_{11}-M_{33}M_{21}M_{12}-M_{31}M_{13}M_{22}-M_{32}M_{23}M_{11}+M_{31}M_{12}M_{23}+M_{32}M_{21}M_{31}.
$
\end{ex}

 The complex algebra $\H_\C$ is isomorphic to the complex algebra of $2\times 2$ matrices over $\C$ using the following correspondence rules:
\[
\i = 
\left[\begin{array}{cc}
0 & i
\\
i & 0
\end{array}
\right], 
\quad 
\j = 
\left[
\begin{array}{cc}
0 & -1
\\
1 & 0
\end{array}
\right], 
\quad 
\k = 
\left[
\begin{array}{cc}
i & 0
\\
0 & -i 
\end{array}
\right].
\]
Therefore, the map sends $q=q_0+q_1\i+q_2\j+q_3\k$ to
\begin{align}\label{def-phi-q}
\varphi(q)=\left[\begin{array}{cc}
 q_0+iq_3 & i q_1 - q_2 \\
 iq_1 + q_2 &  q_0-iq_3
 \end{array}
\right]\in M_2(\C),
\end{align}
and  the involution of  taking conjugate is defined by 
\begin{align}\label{matrix-conj}
\left[
\begin{array}{cc}
x & y 
\\
z&w
\end{array}
\right]^\dagger
= 
\left[
\begin{array}{cc}
w &  -y 
\\
-z& x
\end{array}
\right].
\end{align}
Using the above correspondence, we may map each $n\times n$ matrix $M$ over $\H_\C$ to a $2n \times 2n$ matrix $\varphi(M)$ with complex entries by replacing each coefficient of $M$ by a $2 \times 2$ block of elements in $\varphi(M)$. Note that for any integer $n\ge 1$,  the map 
\begin{align}\label{def-varphi}
\varphi: M_n(\H_\C) \rightarrow M_n (M_2(\C))
\end{align}
is an isomorphism between complex algebras.  
We have 
\begin{align}\label{def-mat-conj}
\varphi(M^\dagger) = Y_n \varphi(M)^T Y_n^{-1}, 
\end{align}
where $T$ denotes transposition and $Y_n$ is the block diagonal matrix 
\[
Y_n =  \varphi ( \operatorname{diag} (\underbrace{\j, \cdots, \j}_{\text{$n$ times}}))=\operatorname{diag}\left(\underbrace{\left[
\begin{array}{cc}
0 &-1
\\
1 & 0
\end{array}
\right], \cdots, \left[
\begin{array}{cc}
0 &-1
\\
1 & 0
\end{array}
\right]}_{\text{$n$ times}}\right).
\]
Note that $\det Y_n=1$, and
\begin{align}\label{inv-Y-n}
Y_n^{-1} = - Y_n.
\end{align}

For self-adjoint quaternion matrices there is another widely used representation of the quaternion determinant as \emph{Pfaffian} of an associated antisymmetric complex matrix. More precisely, for any $n\times n$ quaternion matrix,  define $\psi(M): = -Y_n \varphi (M)$. Then we have 
\[
\psi(M^\dagger) = - \psi(M)^T.
\]
In particular, the $n\times n$ quaternion matrix $M$ is self-adjoint if and only if the $2n \times 2n$ complex matrix $\psi(M)$ is {\it skew-symmetric} and thus the  Pfaffian $\Pf(\psi(M))$ exists (see  Dyson \cite{Dyson70}) and  by Dyson \cite[formula (4.7)]{Dyson72},   we have 
\begin{align}\label{qdet-pf}
\qdet (M)  =\Pf(\psi(M)) = \Pf(-Y_n\varphi(M)). 
\end{align}
In particular, $\det Y_n=\det(-Y_n) = 1$, we have the identity
\begin{align}\label{sq-qdet}
[\qdet (M)]^2 = \det(-Y_n \varphi(M)) = \det(-Y_n) \det(\varphi(M)) = \det(\varphi(M)). 
\end{align}

A useful consequence of this identity is the following 
\begin{lem}\label{lem-inverse}
Let $M$ be a self-adjoint $n\times n$ quaternion matrix. Then $\qdet(M) \ne 0$ if and only if $M$ is invertible. 
\end{lem}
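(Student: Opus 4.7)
The plan is to reduce the statement to a standard fact about complex matrices by passing through the algebra isomorphism $\varphi$ and using the key identity~\eqref{sq-qdet}.

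First, I would note that by~\eqref{sq-qdet} we have $[\qdet(M)]^2 = \det(\varphi(M))$, so $\qdet(M)\neq 0$ if and only if $\det(\varphi(M))\neq 0$. Since $\varphi(M)$ is an ordinary $2n\times 2n$ complex matrix, the latter condition is equivalent to $\varphi(M)$ being invertible in $M_{2n}(\C)\simeq M_n(M_2(\C))$.

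Next, I would invoke the fact stated just before~\eqref{def-varphi} that $\varphi\colon M_n(\H_\C)\to M_n(M_2(\C))$ is an isomorphism of complex algebras. Since an element of an associative unital algebra is invertible if and only if its image under an algebra isomorphism is invertible, we get that $\varphi(M)$ is invertible precisely when $M$ is invertible in $M_n(\H_\C)$. Chaining the three equivalences yields $\qdet(M)\neq 0 \iff M$ invertible, which is the claim.

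There is essentially no obstacle here: the proof is a one-line chain of equivalences, and the only thing to verify carefully is that invertibility is preserved by the algebra isomorphism $\varphi$ (which is immediate: if $\varphi(M)N=I$ in $M_n(M_2(\C))$, then applying $\varphi^{-1}$ gives $M\varphi^{-1}(N)=I$ in $M_n(\H_\C)$, and symmetrically on the other side). Note that self-adjointness of $M$ is used only to ensure that $\qdet(M)$ is defined unambiguously as a scalar via Definition~\ref{dfn:det} and that the identity~\eqref{sq-qdet} applies.
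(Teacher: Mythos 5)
Your proof is correct and follows exactly the same route as the paper: both apply the identity \eqref{sq-qdet} to reduce the question to the invertibility of $\varphi(M)$, and then use that $\varphi$ is an algebra isomorphism. Your write-up is slightly more detailed in spelling out why isomorphisms preserve invertibility, but the argument is the same.
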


\begin{proof}
The identity \eqref{sq-qdet} implies that $\qdet(M) \ne 0$ if and only if the matrix $\varphi(M)$ is invertible.  The assertion of the lemma follows since $\varphi$ is an isomorphism between  the two complex algebras $M_n(\H_\C)$ and $M_n (M_2(\C))$. 
\end{proof}

In what follows,  we will repeatedly make use of the following elementary facts. Recall first an elementary identity for Pfaffian:  consider a commutative ring, then for any $2n\times 2n$ skew-symmetric matrix $A$ and an arbitrary $2n \times 2n$ matrix $B$ over the commutative ring, we have   
\begin{align}\label{pf-id}
\Pf (B A B^T) = \det (B) \Pf(A). 
\end{align}
\begin{lem}[Elementary row/column operations]
\label{lem:e1} Let $M$ be an $n\times n$ self-adjoint quaternion matrix.  Let $M'$ be the self-adjoint matrix obtained from $M$
\begin{enumerate}
\item[(i)]  by switching the column $i$ with column $j$, and row $i$ with row $j$. Then $\qdet M'=\qdet M$.
\item[(ii)]   by right-multiplying column $i$ by $q\in\H_\C$ and left-multiplying row $i$ by $\overline{q}$. Then $\qdet M'=q\overline{q}\qdet M$.
\item[(iii)]   by adding a right-multiple  of column $i$ by $q\in \H_\C$ to column $j$ and a left-multiple of row $i$ by $\bar{q}$   to row $j$. Then $\qdet M'=\qdet M$. 
  \end{enumerate}
\end{lem}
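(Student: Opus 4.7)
The strategy is to realize all three elementary row/column operations uniformly as a \emph{symmetric conjugation} $M \mapsto EME^\dagger$ for an explicit quaternion matrix $E$, and then derive a master formula
\[
\qdet(EME^\dagger)=\det(\varphi(E))\,\qdet(M).
\]
Once this master formula is in hand, each of (i)--(iii) reduces to choosing the appropriate $E$ and computing $\det\varphi(E)$. Self-adjointness of $M'=EME^\dagger$ is automatic from $(EME^\dagger)^\dagger=EM^\dagger E^\dagger=M'$, so the quaternion determinant is well-defined via \eqref{qdet-pf}.

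To prove the master formula I would start from $\qdet(M)=\Pf(\psi(M))$ with $\psi(M)=-Y_n\varphi(M)$ and use that $\varphi$ is an algebra morphism together with \eqref{def-mat-conj} and $Y_n^{-1}=-Y_n$, $Y_n^T=-Y_n$. The computation is
\[
\psi(EME^\dagger)=-Y_n\varphi(E)\varphi(M)\,Y_n\varphi(E)^TY_n^{-1},
\]
and a short bookkeeping exercise with the relations just listed shows that this equals $B\,\psi(M)\,B^T$ with $B:=Y_n\varphi(E)Y_n^{-1}$. Applying the Pfaffian identity \eqref{pf-id} over the commutative ring $\C$ yields
\[
\qdet(EME^\dagger)=\det(B)\,\qdet(M)=\det(\varphi(E))\,\qdet(M),
\]
since conjugation does not change the determinant. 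This master formula is the main technical ingredient; everything else is bookkeeping.

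It remains to specialize. For (i) take $E=P$ the real permutation matrix of the transposition $(i,j)$; then $P^\dagger=P^T$ and $\varphi(P)$ is the block-permutation that swaps the $2\times 2$ block rows $i$ and $j$ of $I_{2n}$, which is a product of two ordinary row transpositions, hence $\det\varphi(P)=1$. For (ii) take $E=\operatorname{diag}(1,\dots,1,\bar q,1,\dots,1)$ with $\bar q$ in position $i$; a direct computation of $(EME^\dagger)_{kl}$ recovers exactly the row/column scalings prescribed in the lemma, and $\det\varphi(E)=\det\varphi(\bar q)=\bar q q=q\bar q$ by the standard identity $\det\varphi(q)=q\bar q$ (the complexified quaternion norm). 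For (iii), with $i\neq j$, take $E=I+\bar q\,e_{ji}$, so that $E^\dagger=I+q\,e_{ij}$; the product $EME^\dagger$ indeed adds $\bar q$ times row $i$ to row $j$ and adds $q$ times column $i$ to column $j$ (checked entry by entry, the cross-term $\bar q M_{ii} q$ appearing in the $(j,j)$ position matches in both computations), while $\varphi(E)=I_{2n}+\varphi(\bar q)\otimes e_{ji}$ is unitriangular up to permutation of rows, so $\det\varphi(E)=1$.

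The main obstacle I anticipate is the master formula, specifically keeping track of the signs introduced by $Y_n^T=-Y_n$ and $Y_n^{-1}=-Y_n$ in turning $\varphi(E^\dagger)=Y_n\varphi(E)^TY_n^{-1}$ into a clean conjugation $B(\cdot)B^T$; once that is done, the three verifications become routine. A minor ancillary check in (iii) is that the cross-term at entry $(j,j)$ matches on both sides, which confirms that right-column-then-left-row modifications really equal the symmetric conjugation $EME^\dagger$.
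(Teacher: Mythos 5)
Your proposal is correct and follows essentially the same route as the paper's proof: realize each operation as a symmetric conjugation ($A^\dagger M A$ in the paper, $EME^\dagger$ with $E=A^\dagger$ in yours), pass to $\qdet M=\Pf(-Y_n\varphi(M))$, use $\varphi(A^\dagger)=Y_n\varphi(A)^TY_n^{-1}$ together with $\Pf(BAB^T)=\det(B)\Pf(A)$, and finish by computing $\det\varphi$ of the conjugating matrix ($1$, $q\bar q$, $1$ in the three cases). The only difference is organizational: you extract the common Pfaffian computation into a single master formula before specializing, whereas the paper repeats it verbatim in each of the three cases.
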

\begin{proof} We will make use of the isomorphism $\varphi$  and its properties. 
\par
(i) Let $P_{ij}$ be the $n\times n$ matrix corresponding to the transposition $(ij)\in S_n$. Then $M'= P_{ij}M P_{ij}$.
%\[
%M' = P_{ij}M P_{ij}.
%\]
%We will consider $P_{ij}$ as an $n\times n$ matrix over $\H_\C$.  Clearly, 
We have 
\[
P_{ij}^{\dagger} = P_{ij}, \quad Y_n \varphi(P_{ij}) = \varphi(P_{ij})Y_n. 
\]
Hence $\varphi(P_{ij})^T = \varphi(P_{ij})$. Therefore,  by \eqref{qdet-pf} and \eqref{pf-id}, we have 
\begin{multline*}
\qdet M' = \Pf(-Y_n \varphi(M')) = \Pf(-Y_n \varphi(P_{ij}MP_{ij}))   = 
\\
= \Pf(-Y_n \varphi(P_{ij})\varphi(M) = \Pf\Big(\varphi(P_{ij})   [-Y_n \varphi(M)] \varphi(P_{ij})^T\Big)=
\\
 = \det(\varphi(P_{ij})) \Pf(-Y_n \varphi(M)) =     \Pf(-Y_n \varphi(M)) = \qdet M,
\end{multline*}
where we used the elementary equality $\det(\varphi(P_{ij})) = 1$. 
\par
(ii) Given $q \in \H_\C$, define a diagonal quaternion matrix of size $n\times n$ by 
$$
D=\operatorname{diag}(1, \ldots, 1, q, 1, \ldots, 1).
$$
where $q$ is in the $i$th position.
Then, $M' = D^\dagger M D$. 
%the operation in (ii) on $M$ is given by 
%\[
%M' = D^\dagger M D. 
%\]
Using  \eqref{def-mat-conj}, we have 
\[
\varphi(D^\dagger) = Y_n \varphi(D)^T Y_n^{-1}.
\]
Therefore, by  \eqref{inv-Y-n}, \eqref{qdet-pf} and \eqref{pf-id}, we obtain 
\begin{multline*}
\qdet M'= \Pf(-Y_n \varphi(D^\dagger MD)) =    \Pf(-Y_n \varphi(D^\dagger) \varphi(M) \varphi(D)) = 
\\
 = \Pf\left(\varphi(D)^T [-Y_n \varphi(M)] \varphi(D)\right) =  \det \varphi(D) \Pf(-Y_n \varphi(M)) = \det\varphi(q) \qdet M. 
\end{multline*}
%Since any $q = q_0 + q_1 \i + q_2 \j + q_3 \k\in \H_\C$ is sent by $\varphi$ to the matrix given by \eqref{def-phi-q}:
%\[
%\varphi(q) = \left[\begin{array}{cc}
% q_0 + iq_3 &  i q_1  -q_2 \\
% iq_1 + q_2 &  q_0 -iq_3
% \end{array}
%\right].
%\] 
From~\eqref{def-phi-q}, we see that 
\[
\det\varphi(q) = q_0^2 + q_1^2   + q_2^2+q_3^2 = q \bar{q} \in \C. 
\]
Hence we obtain the desired equality $\qdet M' = q\bar{q} \qdet M$.
\par
(iii) Set $E_{ij}$ the elementary matrix with $(i,j)$-entry $1$ and all other entries $0$. For any $q\in \H_\C$, define an $n\times n$ quaternion matrix 
\[
A = I_n  + q E_{ij},
\]
where $I_n$ the the identity matrix of size $n\times n$.  Then the operation in (iii) on the matrix $M$ is given by 
\[
M' = A^\dagger M A. 
\]
Using  \eqref{def-mat-conj},  \eqref{inv-Y-n}, \eqref{qdet-pf} and \eqref{pf-id}, we obtain 
\begin{multline*}
\qdet M' = \Pf (-Y_n \varphi( A^\dagger M A)) = \Pf(-Y_n \varphi(A^\dagger) \varphi(M) \varphi(A)) = \\
= \Pf\Big( \varphi(A)^T [-Y_n \varphi(M)] \varphi(A)\Big) = \det \varphi(A) \Pf(-Y_n \varphi(M)) = \qdet M,
\end{multline*}
where we used the elementary equality $\det\varphi(A) = 1$. 
\end{proof}
\begin{lem}
\label{lem:e2} Let $M^{(1)}$ and $M^{(2)}$ be self-adjoint quaternion matrices of size $n_1$ and $n_2$, respectively. Consider the $(n_1+n_2)\times(n_1+n_2)$ self-adjoint block matrix 
$$
M=M^{(1)}\oplus M^{(2)}=
\left[\begin{array}{cc}
M^{(1)} &0\\
0& M^{(2)}
\end{array}
\right].
$$
Then, $\qdet M=\qdet M^{(1)}\qdet M^{(2)}$.
\end{lem}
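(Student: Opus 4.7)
The plan is to reduce the statement to the corresponding multiplicativity of the ordinary Pfaffian on block-diagonal skew-symmetric matrices, via the representation $\qdet(M)=\Pf(-Y_n\varphi(M))$ recorded in \eqref{qdet-pf}.

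First, I would observe that $\varphi$ is an isomorphism of complex algebras \eqref{def-varphi}, so it respects direct sums: if $n=n_1+n_2$ then
\[
\varphi(M)=\varphi(M^{(1)})\oplus\varphi(M^{(2)})\in M_{2n_1}(\C)\oplus M_{2n_2}(\C).
\]
Next, by the very definition of $Y_n$ as the block-diagonal matrix with $n$ copies of $\varphi(\j)$ on the diagonal, we have $Y_n = Y_{n_1}\oplus Y_{n_2}$. Consequently
\[
-Y_n\varphi(M)=\bigl(-Y_{n_1}\varphi(M^{(1)})\bigr)\oplus\bigl(-Y_{n_2}\varphi(M^{(2)})\bigr),
\]
and each summand is skew-symmetric by the self-adjointness of $M^{(j)}$ (as already used to justify \eqref{qdet-pf}).

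Then I would invoke the elementary fact that the Pfaffian of a block-diagonal skew-symmetric matrix factors: $\Pf(A\oplus B)=\Pf(A)\,\Pf(B)$. This is immediate from the combinatorial definition of the Pfaffian, or alternatively from \eqref{pf-id} with $B$ taken to be a block permutation matrix. Applying this together with \eqref{qdet-pf} twice gives
\[
\qdet M=\Pf\bigl(-Y_n\varphi(M)\bigr)=\Pf\bigl(-Y_{n_1}\varphi(M^{(1)})\bigr)\Pf\bigl(-Y_{n_2}\varphi(M^{(2)})\bigr)=\qdet M^{(1)}\cdot\qdet M^{(2)},
\]
which is the desired identity.

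I do not expect any serious obstacle: the only point deserving a brief justification is the compatibility $Y_n=Y_{n_1}\oplus Y_{n_2}$, which is transparent from its definition, and the multiplicativity of the Pfaffian on block-diagonal skew-symmetric matrices, which is standard. If one preferred to avoid the Pfaffian representation altogether, an alternative would be to use the Moore combinatorial formula \eqref{eq:def_qdet}: since $M_{ij}=0$ whenever $i\le n_1<j$ or $j\le n_1<i$, only permutations $\sigma\in S_n$ preserving the partition $\{1,\dots,n_1\}\sqcup\{n_1+1,\dots,n\}$ contribute, and such $\sigma$ factor uniquely as $\sigma_1\sigma_2$ with $\sigma_1\in S_{n_1}$ and $\sigma_2$ acting on the second block; the cycle-ordering convention (largest elements decreasing) is automatically respected because every cycle of $\sigma_2$ has largest element greater than every cycle of $\sigma_1$, and $\epsilon(\sigma)=\epsilon(\sigma_1)\epsilon(\sigma_2)$, yielding the same factorisation. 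The Pfaffian route is shorter and is the one I would adopt.
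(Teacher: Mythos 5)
Your proposal is correct, but your primary route is not the one the paper takes: the paper's entire proof is the single sentence ``A consequence of the combinatorial formula~\eqref{eq:def_qdet}'', i.e.\ precisely the argument you relegate to your final ``alternative'' paragraph (off-block entries vanish, so only partition-preserving permutations survive; cycles of the second block all precede cycles of the first in Moore's ordering, and the two partial sums factor out). Your main route instead passes through $\qdet(M)=\Pf(-Y_n\varphi(M))$ from \eqref{qdet-pf}, the compatibility $Y_{n_1+n_2}=Y_{n_1}\oplus Y_{n_2}$, and the multiplicativity of the Pfaffian on block-diagonal skew-symmetric matrices; all of these steps are sound, and this is consistent with how the paper itself proves Lemma~\ref{lem:e1}, so there is no circularity. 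What each approach buys: the Pfaffian route reduces everything to a standard fact about ordinary Pfaffians and avoids any bookkeeping with noncommuting factors, while the combinatorial route is self-contained within the quaternion formalism and makes transparent why the cycle-ordering convention causes no trouble (one gets $\qdet M^{(2)}\qdet M^{(1)}$ a priori, which equals $\qdet M^{(1)}\qdet M^{(2)}$ since both factors are scalars by Theorem~\ref{thm:Dyson_prop}(i)). One minor quibble: your parenthetical suggestion that $\Pf(A\oplus B)=\Pf(A)\Pf(B)$ could be obtained from \eqref{pf-id} with a block permutation matrix does not actually work --- \eqref{pf-id} only conjugates a single skew-symmetric matrix and cannot split a direct sum into a product --- but your primary justification via the combinatorial definition of the Pfaffian is the correct one and suffices.
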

\begin{proof}
A consequence of the combinatorial formula~\eqref{eq:def_qdet}.
\end{proof}

\begin{thm}[Dyson \cite{Dyson72}] If $M$ is almost self-adjoint and has two identical columns (or rows) then $\qdet M=0$.
 Let $M$, $N$, $P$  be almost self-adjoint matrices, satisfying the conditions~\eqref{eq:a-s-a}, and
 \begin{align}
 M_{ij}=N_{ij}=P_{ij},\quad i\neq k\\
 M_{ij}+N_{ij}=P_{ij},\quad i= k
 \end{align} 
 for some common row-index $k$. Then $\qdet M+\qdet N=\qdet P$.
\end{thm}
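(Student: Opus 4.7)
The plan is to deduce both assertions of the theorem jointly by induction on the size $n$, using the recursive Definition~\ref{dfn:det}; the additivity is essentially immediate, while the vanishing statement requires a case analysis.

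For the additive property, observe that the submatrix $M(k,m)$ depends only on the entries $M_{l,l'}$ with $l\neq k$ together with the column-$k$ entries $M_{l,k}$ for $l\neq k$, and in particular is independent of the whole row $k$ of the original matrix. Hence, for three almost self-adjoint matrices $M,N,P$ sharing the same data outside row $k$, one has $M(k,m)=N(k,m)=P(k,m)$ for every $m$, and the recursive formula yields
\[
\qdet P=\sum_{m=1}^n\epsilon_{km}P_{km}\qdet P(k,m)=\sum_{m=1}^n\epsilon_{km}(M_{km}+N_{km})\qdet M(k,m)=\qdet M+\qdet N.
\]

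For the vanishing statement we induct on $n$. The base case $n\le 2$ is direct: in the $2\times 2$ setting almost self-adjointness forces the non-$k$ diagonal entry to lie in $\C$, and the resulting scalar commutativity makes the expansion $\qdet M=M_{11}M_{22}-M_{12}M_{21}$ vanish under either the duplicate-row or the duplicate-column hypothesis. For the inductive step, suppose first that two columns $i,j$ with $i,j\neq k$ are identical. In the expansion $\qdet M=\sum_m\epsilon_{km}M_{km}\qdet M(k,m)$, for each $m\notin\{i,j\}$ the submatrix $M(k,m)$ still carries equal columns at positions $i$ and $j$, so the inductive hypothesis gives $\qdet M(k,m)=0$. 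The two remaining terms combine, via $M_{ki}=M_{kj}$, to $-M_{ki}\bigl(\qdet M(k,i)+\qdet M(k,j)\bigr)$; here $M(k,j)$ is obtained from $M(k,i)$ by swapping its $i$-th and $j$-th columns (together with the requisite shift of the singled-out index from $i$ to $j$). The duplicate-column hypothesis, together with almost self-adjointness, forces the four entries $M_{ii},M_{ij},M_{ji},M_{jj}$ to coincide with a common scalar, and a parallel induction on the size shows that this particular column swap negates the quaternion determinant, yielding $\qdet M(k,j)=-\qdet M(k,i)$ and hence $\qdet M=0$. The subcase where one duplicate column is column $k$ itself (say column $k$ equals column $j$) is easier: the replacement in forming $M(k,j)$ is a no-op, so $M(k,j)=M(k,k)$, which is self-adjoint and hence has an unambiguous quaternion determinant by Theorem~\ref{thm:Dyson_prop}(i); combined with $M_{kk}=M_{kj}$, the $m=k$ and $m=j$ terms cancel, while for $m\notin\{k,j\}$ the replacement creates a duplicate between the new column $m$ and column $j$, so induction kills those terms.

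The duplicate-row cases are handled in parallel. When two rows $i,j\neq k$ coincide, almost self-adjointness forces rows $i$ and $j$ of each $M(k,m)$ to remain equal (the only potentially problematic entry, at column $m$, uses $M_{i,k}=M_{j,k}$, which follows from the row equality at column $k$), and induction eliminates every term of the expansion. The case row $k$ equals row $j$ for some $j\neq k$ reduces to the duplicate-column case via the auxiliary identity $\qdet M^\dagger=\overline{\qdet M}$ for almost self-adjoint $M$, itself proved by a parallel recursion: under this identity the hypothesis row-$k$ equals row-$j$ in $M$ becomes column-$k$ equals column-$j$ in $M^\dagger$, already treated. The principal obstacle in the whole program is the column-swap lemma used in the first inductive case: proving directly that swapping the singled-out column with another column of the structure dictated by the duplication hypothesis negates $\qdet$ requires careful bookkeeping of the scalar constraints on the four cross-entries, and without the almost-self-adjoint structure the analogous identity would fail owing to the non-commutativity of quaternion products.
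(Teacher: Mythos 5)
The paper offers no proof of this theorem: it is quoted from Dyson \cite{Dyson72} (and in fact restates items (ii)--(iii) of Theorem~\ref{thm:Dyson_prop}, likewise stated without proof). So your proposal must be judged on its own. The additivity part is correct and complete: since $M(k,m)$ is built only from entries $M_{pq}$ with $p\neq k$ (the replaced column consists of the entries $M_{pk}$, $p\neq k$, of \emph{column} $k$, not of row $k$), the submatrices coincide for $M,N,P$ and the recursion \eqref{eq:rec_qdet} gives the claim immediately. The base case and the two ``easy'' branches of the vanishing statement (duplicate rows $i,j\neq k$; duplicate column equal to column $k$) are also sound, including your observation that $M_{ii}=M_{ij}=M_{ji}=M_{jj}$ is a common complex scalar.

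The genuine gap is that the two auxiliary facts carrying the hard branches are asserted, not proved. First, the column-swap lemma $\qdet M(k,j)=-\qdet M(k,i)$, which you yourself call ``the principal obstacle,'' is dispatched with ``a parallel induction\dots shows''; but if you actually expand $\qdet M(k,i)$ along its distinguished row $i$ and $\qdet M(k,j)$ along its distinguished row $j$, the terms do not pair off index-by-index: e.g.\ the $m=i$ term of the first is $M_{ik}\,\qdet M(k,i)(i,i)$ while the $m=j$ term of the second is $M_{jk}\,\qdet M(k,j)(j,j)$, and $M_{ik}\neq M_{jk}$ in general (duplicate columns force rows $i$ and $j$ to agree only off column $k$). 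So the claimed induction needs a nontrivial regrouping that you have not supplied; the lemma is true (one can check it by hand for $n=3$), but this is precisely where the work lies. Second, the identity $\qdet M^{\dagger}=\overline{\qdet M}$ for \emph{almost} self-adjoint $M$, used to reduce the case ``row $k$ = row $j$'' to the column case, does not follow from a ``parallel recursion'': the recursion expands along the distinguished row $k$, whereas taking adjoints exchanges row $k$ with column $k$ --- exactly the unconstrained data --- so $(M^{\dagger})(k,l)\neq (M(k,l))^{\dagger}$ in general and the recursion does not commute with $\dagger$ termwise. Until these two lemmas are actually proved, the vanishing statement is not established. (A cleaner route for the self-adjoint core is available inside the paper itself: two identical columns of $M$ give two identical column pairs of $\varphi(M)$, whence $[\qdet M]^2=\det\varphi(M)=0$ by \eqref{sq-qdet}; but extending this to the almost self-adjoint case still requires an expansion argument of the kind you are missing.)
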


\section{Fredholm quaternion determinant}
\label{app:Fredholmqdet}
Let $T\colon E\times E\rightarrow \H_\C$ be a self-adjoint quaternion kernel $T(x,y)=\overline{T(y,x)}$, such that  $T$ induces a bounded {\it trace-class} operator, denoted again by $T$, on the Hilbert space $L^2(E, \mu; \H_\C)$ via the formula 
\[
T f (x)= \int_E   T(x, y) f(y) d\mu(y), \quad  \text{for all $  f \in L^2(E, \mu; \H_\C)$},
\]
where $T(x, y) f(y)$ is understood as the multiplication of two elements $T(x, y)$ and $f(y)$ in the algebra $\H_\C$.  Then, we define (see \cite{Kargin14}) the Fredholm quaternion determinant of $1 + T$ by
\begin{align}\label{def-fred-qdet}
\qdet(1+T) =1+\sum_{n=1}^{\infty}\frac{1}{n!}\int_{E^n}\qdet\left[T(x_i,x_j)\right]_{i,j=1}^nd\mu(x_1)\cdots d\mu(x_n).
\end{align}
Note that, if $E$ is finite, so that $1+T$ is a matrix, then this definition agrees with the usual definition of the quaternion determinant of the matrix $1+T$.  Since $T$ is quaternion self-adjoint,  by Remark \ref{rem-sa}, for any complex number $z \in \C$,  the kernel $z T$ is also quaternion self-adjoint and the Fredholm quaternion determinant $\qdet (1 + z T)$ is then defined.  Remark \ref{rem-sa} implies also that for any bounded complex valued function $f(x)$ on $E$, the kernel $\sqrt{f} T \sqrt{f}$ is well-defined (does not depend on the choices of the branch of the multi-valued function $z \mapsto \sqrt{z}$) and is quaternion self-adjoint.

\begin{lem}\label{lem-holo}
Let $T: E\times E\rightarrow \H_\C$ be a self-adjoint quaternion kernel  which induces a bounded  trace-class operator on the Hilbert space $L^2(E, \mu; \H_C)$. Assume that for any $z\in \C$, we have a bounded measurable function $f_z: E\rightarrow \C$ and the map $\C \ni z \mapsto f_z \in L^\infty(E,\mu)$ is holomorphic on the whole complex plane.  Then 
the function 
\[
\C \ni z \mapsto \qdet (1 + \sqrt{f_z} T \sqrt{f_z})
\]
is also holomorphic on the whole complex plane. 
\end{lem}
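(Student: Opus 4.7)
My plan is to work directly with the series expansion~\eqref{def-fred-qdet}, which sidesteps the non-holomorphy of $z \mapsto \sqrt{f_z}$.

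First, I would eliminate the square roots from inside the quaternion determinant. Since $\sqrt{f_z(x_i)} \in \C$ is a scalar in $\H_\C$ and therefore coincides with its own adjoint, iterating Lemma~\ref{lem:e1}(ii) over $i = 1, \dots, n$ yields
\[
\qdet[\sqrt{f_z(x_i)}\,T(x_i,x_j)\,\sqrt{f_z(x_j)}]_{i,j=1}^n = \Bigl(\prod_{i=1}^n f_z(x_i)\Bigr) \qdet[T(x_i,x_j)]_{i,j=1}^n.
\]
Substituting into~\eqref{def-fred-qdet} gives
\[
\qdet(1 + \sqrt{f_z}\,T\sqrt{f_z}) = 1 + \sum_{n=1}^{\infty} F_n(z), \qquad F_n(z) := \frac{1}{n!} \int_{E^n} \prod_{i=1}^n f_z(x_i) \cdot \qdet[T(x_i,x_j)]_{i,j=1}^n \, d\mu^{\otimes n},
\]
where the integrand is now manifestly holomorphic in $z$ through the scalar factors $f_z(x_i)$.

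Next, each $F_n$ would be shown to be entire. The hypothesis that $z \mapsto f_z \in L^\infty(E, \mu)$ is entire yields a convergent Taylor expansion $f_z = \sum_k z^k g_k$ in $L^\infty$; a standard diagonal extraction over a countable dense set of parameters ensures that for $\mu$-a.e.\ $x \in E$ the scalar map $z \mapsto f_z(x)$ is entire, and hence so is $z \mapsto \prod_{i=1}^n f_z(x_i)$ for $\mu^{\otimes n}$-a.e.\ tuple. On any compact loop $\gamma \subset \C$, setting $M_\gamma := \sup_{z \in \gamma} \|f_z\|_\infty < \infty$, the uniform domination $|\prod_i f_z(x_i)\,\qdet[T(x_i,x_j)]| \le M_\gamma^n\,|\qdet[T(x_i,x_j)]|$ together with integrability of $|\qdet[T(x_i,x_j)]|$ over $E^n$ permits Fubini-swapping, giving
\[
\oint_\gamma F_n(z)\,dz = \frac{1}{n!}\int_{E^n} \qdet[T(x_i,x_j)] \oint_\gamma \prod_{i=1}^n f_z(x_i)\,dz\,d\mu^{\otimes n} = 0,
\]
since the inner contour integral vanishes pointwise a.e.; Morera's theorem then shows $F_n$ is entire.

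Finally, to deduce holomorphy of the sum, the same bound gives $\sup_{z \in K} |F_n(z)| \le \frac{M_K^n}{n!} \int_{E^n} |\qdet[T(x_i,x_j)]|\,d\mu^{\otimes n}$ on any compact $K \subset \C$, and the absolute summability of this majorant reduces, via the identity~\eqref{sq-qdet} relating $\qdet$ to $\det\varphi$, to the standard absolute convergence of the Fredholm expansion of $\det(1 + M_K\,\varphi(T))$, valid because $\varphi(T)$ is trace-class on $L^2(E, \mu; \C^2)$. By Weierstrass's theorem, the locally uniform limit $\qdet(1 + \sqrt{f_z}\,T\sqrt{f_z})$ is then entire. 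The main obstacle is precisely this majorant estimate: the required Hadamard-type bound on $\int|\qdet[T(x_i,x_j)]|\,d\mu^{\otimes n}$ is not built into the definition~\eqref{def-fred-qdet} and must be imported from ordinary Fredholm theory through the doubling map $\varphi$; once secured, the rest of the argument is a routine application of the Morera--Fubini--Weierstrass machinery.
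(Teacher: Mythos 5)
Your proof is correct and follows essentially the same route as the paper: the paper's entire argument consists of the identity $\qdet[\sqrt{f_z(x_i)}\,T(x_i,x_j)\,\sqrt{f_z(x_j)}]=\prod_i f_z(x_i)\,\qdet[T(x_i,x_j)]$ from Lemma~\ref{lem:e1}(ii) substituted into the definition \eqref{def-fred-qdet}, with the term-by-term holomorphy and the locally uniform convergence of the series left implicit, which is exactly what your Morera--Fubini--Weierstrass superstructure supplies. One small caveat on the majorant you correctly identify as the crux: by \eqref{sq-qdet} one has $|\qdet[T(x_i,x_j)]|=|\det[\varphi(T)(x_i,x_j)]|^{1/2}$, so the bound is not literally a term of the Fredholm expansion of $\det(1+M_K\,\varphi(T))$; it is obtained instead from a Hadamard-type estimate (immediate in the paper's applications, where the kernel is bounded and supported on a set of finite measure), a point the paper itself does not address.
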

\begin{proof}
Note that for any $n\ge 1$ and $x_1, \cdots, x_n\in E$, by item (ii) of Lemma \ref{lem:e1}, we have 
\[
\qdet\left[ \sqrt{f_z(x_i)}T(x_i,x_j) \sqrt{f_z(x_j)}\right]_{i,j=1}^n =  \prod_{i = 1}^n f_z(x_i)   \qdet\left[ T(x_i,x_j) \right]_{i,j=1}^n.
\]
Now the lemma follows from the definition \eqref{def-fred-qdet} and the following equalities: 
\begin{multline*}
\qdet(1+ \sqrt{f_z}T \sqrt{f_z}) =
\\
= 1+\sum_{n=1}^{\infty}\frac{1}{n!}\int_{E^n}\qdet\left[ \sqrt{f_z(x_i)}T(x_i,x_j) \sqrt{f_z(x_j)}\right]_{i,j=1}^nd\mu(x_1)\cdots d\mu(x_n)  = 
\\
= 1+\sum_{n=1}^{\infty}\frac{1}{n!}\int_{E^n}  \prod_{i = 1}^n f_z(x_i)   \qdet\left[T(x_i,x_j) \right]_{i,j=1}^n  d\mu(x_1)\cdots d\mu(x_n).
\end{multline*}
\end{proof}

 We shall also use the following  identity   (see \cite[Lemma 8.1]{Rains}) which is an extension  to the case of the Fredholm quaternion determinant of the identity \eqref{sq-qdet}. Recall that we have an isomorphism of complex algebras: $\varphi: \H_\C\rightarrow M_2(\C)$. To the quaternion kernel $T$, we define a matrix-valued kernel $\varphi(T)$ by setting
\[
\varphi(T)(x, y) = \varphi(T(x, y)).
\]
In particular, we can view the kernel $\varphi(T)$ as the integral kernel of an integral operator, denoted again by $\varphi(T)$, acting on the Hilbert space 
\[
L^2(E, \mu; \C^2) = L^2(E, \mu) \oplus L^2(E, \mu).
\]
Then, we have the identity
\begin{align}\label{pf-det}
\left[\qdet (1 + T)\right]^2 = \det( 1 +  \varphi(T)).
\end{align}


\begin{thebibliography}{10}

\bibitem{Aslaksen96}
H. Aslaksen
\emph{Quaternionic determinants},
 The Mathematical Intelligencer {\bf 18}, 57-65 (1996).

\bibitem{Bufetov12} 
A. I. Bufetov,
\emph{ Multiplicative functionals of determinantal processes}, 
Uspekhi Mat. Nauk. {\bf 67},  177-178 (2012); translation in Russian Math. Surveys {\bf 67}, 181-182 (2012).

\bibitem{Bufetov13} 
A. I. Bufetov, 
\emph{Infinite determinantal measures}, 
Electron. Res. Announc. Math. Sci. {\bf 20}, 12-30 (2013).

\bibitem{Bufetov19} 
A. I. Bufetov, Y. Qiu and A. Shamov,
\emph{Kernels of conditional determinantal measures and the proof of the Lyons-Peres Conjecture}, to appear in J. Eur. Math. Soc. 

\bibitem{Bufetov18}  
A. I. Bufetov, 
\emph{Quasi-symmetries of determinantal point processes},
 Ann. Probab. {\bf 46},  956-1003 (2018).
 
 



\bibitem{DV-1}
D.~J. Daley and D.~Vere-Jones.
\newblock {\em An introduction to the theory of point processes. {V}ol. {I}}.
\newblock Probability and its Applications (New York). Springer-Verlag, New
  York, second edition, 2003.
\newblock Elementary theory and methods.


\bibitem{Dyson70} 
F. J. Dyson,
\emph{Correlations between eigenvalues of a random matrix}, 
Commun. Math. Phys. {\bf 19}, 235-250 (1970).

\bibitem{Dyson72} F. J. Dyson,
\emph{Quaternion determinants},
Helvetica Physica Acta {\bf 45}, 289-302 (1972).

\bibitem{Hocking-Young}
J.~G. Hocking and G.~S. Young.
\newblock {\em Topology}.
\newblock Addison-Wesley, Reading, Mass.-London, 1961.



\bibitem{Kallenberg}
Olav Kallenberg.
\newblock {\em Random measures}.
\newblock Akademie-Verlag, Berlin; Academic Press, Inc., London, fourth
  edition, 1986.

\bibitem{Kargin14} V. Kargin,
\emph{On Pfaffian Random Point Fields},
J. Stat. Phys. {\bf 154}, 681-704 (2014).

\bibitem{Mat-Shirai} S. Matsumoto and T. Shirai, 
\emph{Correlation functions for zeros of a Gaussian
power series and Pfaffians},
Electron. J. Probab. {\bf 18} (2013), no. 49, 1-18.


\bibitem{Moore22} E. H. Moore, 
\emph{On the determinant of an hermitian matrix
of quaternionic elements}, 
Bull. Amer. Math. Soc. {\bf 28}, 161 (1922).

\bibitem{Moore35} E. H. Moore and R. W. Barnard, 
\emph{General Analysis}, Part I, Memoirs of the American Philosophical Society, Philadelphia (1935).


\bibitem{Rains}
E. M. Rains, 
\emph{Correlation functions for symmetrized increasing subsequences}, 
arXiv:math/0006097.


\bibitem{Shirai03} 
 T. Shirai and Y. Takahashi,
 \emph{Random point fields associated with certain Fredholm determinants I. Fermion, Poisson and boson point processes}, 
 J. Funct. Anal. {\bf 205(2)}, 414-463 (2003).
 
 \bibitem{Viswanath71}
 K. Viswanath,
 \emph{Normal Operators on Quaternionic Hilbert Spaces},
 Transactions of the American Mathematical Society {\bf 162}, 337-350 (1971).

\end{thebibliography}
\end{document}